\documentclass{article}
%\linespread{1.2}
 \topmargin =0mm \headheight=12.0pt \headsep=5mm \textheight =200mm
 \textwidth =165mm \oddsidemargin=0mm\evensidemargin =0mm
\sloppy \brokenpenalty=10000
\usepackage{fancyhdr}
\usepackage{amsmath, amssymb, latexsym, amscd, amsthm,amsfonts,amstext}
\usepackage{subfigure}
\usepackage{enumerate}
\usepackage{xcolor}
\usepackage{textcomp}
\usepackage{mathtools}

\usepackage{graphicx}
\newtheorem{theorem}{Theorem}[section]
\newtheorem{lemma}[theorem]{Lemma}

\numberwithin{equation}{section}
\newtheorem{definition}[theorem]{Definition}
\newtheorem{remark}[theorem]{Remark}

\numberwithin{equation}{section}

\usepackage[mathscr]{eucal}
\usepackage{graphicx}
\usepackage{subfigure}
\usepackage{cite}
\usepackage{stfloats}
\usepackage{fancyhdr}
\usepackage{enumerate}
\usepackage{xcolor}

\setcounter{secnumdepth}{4}
\allowdisplaybreaks

\title{The equivalent refraction index for the acoustic scattering by many small obstacles: with error estimates.} 
\author{Bashir Ahmad\thanks{Nonlinear Analysis and Applied Mathematics Research Group (NAAM), Department of  Mathematics,
Faculty of Sciences,
King Abdulaziz University,
P.O. Box 80203,
Jeddah 21589,
Saudi Arabia.
(Email: bashirahmad\_qau@yahoo.com).} 
\and  
 Durga Prasad Challa\footnote{C\lowercase{orresponding author}: D\lowercase{urga} P\lowercase{rasad} C\lowercase{halla}.} 
\thanks{Department of mathematics, 
Inha university,  Incheon 402-751, S. Korea.
(Email: durga.challa@inha.ac.kr).
}%\footnote{Corresponding author: Durga Prasad Challa} 
\and Mokhtar Kirane\footnotemark[1]
\thanks{Laboratoire de Math\'ematiques,
P\^ole Sciences et Technologies, 
Universit\'e de La Rochelle, 
Avenue Michel Cr\'epeau
17042, La Rochelle Cedex, France, and Nonlinear Analysis and Applied Mathematics Research Group (NAAM), Department of  Mathematics,
Faculty of Sciences,
King Abdulaziz University,
P.O. Box 80203,
Jeddah 21589,
Saudi Arabia. (Email: mokhtar.kirane@univ-lr.fr).}
\and  Mourad Sini\thanks{RICAM, Austrian Academy of Sciences,
Altenbergerstrasse 69, A-4040, Linz, Austria.
(Email: mourad.sini@oeaw.ac.at).
}
}
 \begin{document}
\graphicspath{{Figures-eps/}}
 \maketitle
\begin{abstract}
 Let $M$ be the number of bounded and Lipschitz regular obstacles $D_j, j:=1, ..., M$ having a maximum radius $a$, $a<<1$, located in a bounded domain $\Omega$ of $\mathbb{R}^3$. 
 We are concerned with the acoustic scattering problem with a very large number of obstacles, as $M:=M(a):=O(a^{-1})$, $a\rightarrow 0$, when they are arbitrarily distributed in $\Omega$
 with a minimum distance between them of the order $d:=d(a):=O(a^t)$ with $t$ in an appropriate range.
 We show that the acoustic farfields corresponding to the scattered waves by this collection of obstacles, taken to be soft obstacles, converge uniformly in terms of the incident as well the propagation directions, 
 to the one corresponding to an acoustic refraction index as $a\rightarrow 0$. This refraction index is given as a product of two coefficients $\bold{C}$ and $K$, where the first one 
 is related to the geometry of the obstacles (precisely their capacitance) and the second one is related to the local distribution of these obstacles. 
 In addition, we provide explicit error estimates, in terms of $a$, in the case when the obstacles are locally the same (i.e. have the same capacitance, or the coefficient 
 $\bold{C}$ is piecewise constant) in $\Omega$ and the coefficient $K$ is H$\ddot{\mbox{o}}$lder continuous. These approximations can be applied, in particular, to the theory of acoustic materials 
 for the design of refraction indices by perforation using either the geometry of the holes, i.e. the coefficient $\bold{C}$, or their local distribution in a given domain $\Omega$, 
 i.e. the coefficient $K$.
\end{abstract}
% \begin{keywords}
 \textbf{Keywords}: Acoustic scattering, Multiple scattering,  Efective medium.
% \end{keywords}

\pagestyle{myheadings}
 \thispagestyle{plain}
 \markboth{  B. Ahmad, D. P. Challa, M. Kirane and M. Sini }{The equivalent medium for acoustic scattering by many small obstacles}

\section{Introduction and statement of the results}\label{Introduction-smallac-sdlp}

 Let $B_1, B_2,\dots, B_M$ be $M$ open, bounded and simply connected sets in $\mathbb{R}^3$ with Lipschitz boundaries containing the origin.
We assume that the Lipschitz constants of $B_j$, $j=1,..., M$ are uniformly bounded.  
We set $D_m:=\epsilon B_m+z_m$ to be the small bodies characterized by the parameter 
$\epsilon>0$ and the locations $z_m\in \mathbb{R}^3$, $m=1,\dots,M$. 
We denote by  $U^{s}$ the acoustic field scattered by the $M$ small and soft bodies $D_m\subset \mathbb{R}^{3}$ due to 
the incident plane wave $U^{i}(x,\theta):=e^{ikx\cdot\theta}$, 
with the incident direction $\theta \in \mathbb{S}^2$, with $\mathbb{S}^2$ being the unit sphere. Hence the total field $U^{t}:=U^{i}+U^{s}$ satisfies the following exterior Dirichlet problem of the acoustic waves
\begin{equation}
(\Delta + \kappa^{2})U^{t}=0 \mbox{ in }\mathbb{R}^{3}\backslash \left(\mathop{\cup}_{m=1}^M \bar{D}_m\right),\label{acimpoenetrable}
\end{equation}
\begin{equation}
U^{t}|_{\partial D_m}=0,\, 1\leq m \leq M, \label{acgoverningsupport}  
\end{equation}
\begin{equation}
\frac{\partial U^{s}}{\partial |x|}-i\kappa U^{s}=o\left(\frac{1}{|x|}\right), |x|\rightarrow\infty, ~(\text{S.R.C}) \label{radiationc}
\end{equation}
where  $\kappa>0$ is the wave number, $\kappa=2\pi\slash \lambda$, $\lambda$ is the wave length and S.R.C stands for the Sommerfield radiation condition.  The scattering problem 
(\ref{acimpoenetrable}-\ref{radiationc}) is well posed in appropriate spaces, see \cite{C-K:1998, Mclean:2000} for instance, and the scattered field $U^s(x, \theta)$ has the following asymptotic expansion:
\begin{equation}\label{far-field}
 U^s(x, \theta)=\frac{e^{i \kappa |x|}}{|x|}U^{\infty}(\hat{x}, \theta) + O(|x|^{-2}), \quad |x|
\rightarrow \infty,
\end{equation}
with $\hat{x}:=\frac{x}{\vert x\vert}$, where the function
$U^{\infty}(\hat{x}, \theta)$ for $(\hat{x}, \theta)\in \mathbb{S}^{2} \times \mathbb{S}^{2}$  is called the far-field pattern.
We recall that the fundamental solution $\Phi_\kappa(x,y)$ of the Helmholtz equation in $\mathbb{R}^3$
with the fixed wave number $\kappa$ is given by $\Phi_\kappa(x,y):=\frac{e^{i\kappa|x-y|}}{4\pi|x-y|},\quad \text{for all } x,y\in\mathbb{R}^3$.
\bigskip

\begin{definition} %[Definition \ref{Introduction-smallac-sdlp}.\ref{Def1}.] 
\label{Def1}
We define 
\begin{enumerate}%[\ref{Def1}.1.]
 \item $a$ as the maximum among the diameters, $diam$, of the small bodies $D_m$, i.e.
\begin{equation}\label{def-a} 
a:=\max\limits_{1\leq m\leq M } diam (D_m) ~~\big[=\epsilon \max\limits_{1\leq m\leq M } diam (B_m)\big],
\end{equation}

 \item  $d$ as the minimum distance  between the small bodies $\{D_1,D_2,\dots,D_m\}$, i.e.
\begin{equation}\label{def-d}
d:=\min\limits_{\substack{m\neq j\\1\leq m,j\leq M }} d_{mj},
\end{equation}
$\text{where}\,d_{mj}:=dist(D_m, D_j)$. We assume that
\begin{equation}\label{def-dmax}
0\,<\,d\,\leq\,d_{\max},
\end{equation}
and $d_{\max}$ is given.
\item $\kappa_{\max}$ as the upper bound of the used wave numbers, i.e. $\kappa\in[0,\,\kappa_{\max}]$.
\end{enumerate} 
\end{definition}
\bigskip

We assume that $D_m=\epsilon{B}_m+z_m, m=1,\dots,M$, with the same diameter $a$, are non-flat Lipschitz obstacles, i.e. $D_m$'s are Lipschitz obstacles and there exist constants $t_m \in (0, 1]$ such that

\begin{equation}\label{non-flat-condition}
 B^{3}_{t_m\frac{a}{2}}(z_m)\subset\,D_m\subset\,B^{3}_{\frac{a}{2}}(z_m),
 \end{equation}
 where $t_m $ are assumed to be uniformly bounded from below by a positive constant.
\bigskip

In a recent work \cite{C-S:2014}, we have shown that there exist two positive constants $a_0$ and $c_0$ depending only on the 
Lipschitz character of $B_m,m=1,\dots,M$, $d_{\max}$ and $\kappa_{\max}$ such that
if 
\begin{equation} \label{conditions}
a \leq a_0 ~~ \mbox{and} ~~ \sqrt{M-1}\frac{a}{d}\leq c_0
\end{equation} 
then the far-field pattern $U^\infty(\hat{x},\theta)$ has the following asymptotic expansion

\begin{eqnarray}\label{x oustdie1 D_m farmain-recent-near}
 \hspace{-1cm}U^\infty(\hat{x},\theta)
&=&\sum_{m=1}^{M}e^{-i\kappa\hat{x}\cdot z_m}Q_m\nonumber\\
&&+O\left(M\left[a^2+\frac{a^3}{d^{5-3\alpha}}+\frac{ a^4}{d^{9-6\alpha}}\right]+M(M-1)\left[\frac{a^3}{d^{2\alpha}}+\frac{a^4}{d^{4-\alpha}}+\frac{a^4}{d^{5-2\alpha}}\right]
+M(M-1)^2\frac{a^4}{d^{3\alpha}}\right)
 \end{eqnarray}
uniformly in $\hat{x}$ and $\theta$ in $\mathbb{S}^2$, where the parameter $\alpha$, $0< \alpha \leq 1$, is related to the number of obstacles localed 'near' a given obstacle, see 
\cite{C-S:2014} and explicit formulation. The coefficients $Q_m$, $m=1,..., M,$ are the solutions of the following linear algebraic system
\begin{eqnarray}\label{fracqcfracmain}
 Q_m +\sum_{\substack{j=1 \\ j\neq m}}^{M}C_m \Phi_\kappa(z_m,z_j)Q_j&=&-C_mU^{i}(z_m, \theta),~~
\end{eqnarray}
for $ m=1,..., M,$ with $C_m:=\int_{\partial D_m}\sigma_m(s)ds$ and $\sigma_{m}$ is 
the solution of the integral equation of the first kind
\begin{eqnarray}\label{barqcimsurfacefrm1main}
\int_{\partial D_m}\frac{\sigma_{m} (s)}{4\pi|t-s|}ds&=&1,~ t\in \partial D_m.
\end{eqnarray}
%where $\bar{Q}_m:=\int_{\partial D_m}\sigma_m(s)ds$.
The algebraic system \eqref{fracqcfracmain} is invertible under the conditions:
%\footnote{Remark that this last condition is valid, in particular, if the diameter of $\Omega$, i.e. $diam(\Omega)$, satisfies the following condition $diam(\Omega)< \frac{\pi}{2\kappa}$ for example.}
\footnote{If $\Omega$ is a domain containing the small bodies, and $diam(\Omega)$ denotes its diameter, 
then one example for the validity of the second condition in \eqref{invertibilityconditionsmainthm} is $diam(\Omega)< \frac{\pi}{2\kappa}$.}
\begin{eqnarray}\label{invertibilityconditionsmainthm}
 \frac{a}{d}\leq c_1 \text{ and } 
 \min_{j\neq m}\cos(\kappa \vert z_j- z_m\vert)\geq 0,
\end{eqnarray}
where $c_1$ depends only on the Lipschitz character of the obstacles $B_j$, $j=1, ..., M$. 

\bigskip

The formula (\ref{x oustdie1 D_m farmain-recent-near}) says that the farfields corresponding to $M$ obstacles can be approximated by the expression $\sum_{m=1}^{M}e^{-i\kappa\hat{x}\cdot z_m}Q_m$, that we call 
the Foldy-Lax field since it is reminiscent to the field generated by a collection of point-like scatterers \cite{LLF:PR1945, Lax-M:RMP1951}, see also the monograph 
\cite{Martin:2006}. 
Hence if we have a reasonably large number of obstacles, we can reduce the scattering problem 
to an inversion of the an algebraic system, i.e.  (\ref{fracqcfracmain}). In this paper, we are concerned with the case where we have an extremely large number of 
obstacles of the form  $M:=M(a):=O(a^{-s})$ with  $s>0$
and the minimum distance $d:=d(a):=O(a^t)$ with  $t>0$. In this case, the asymptotic expansion \eqref{x oustdie1 D_m farmain-recent-near} can be rewritten as 
\begin{eqnarray}\label{x oustdie1 D_m farmain-recent**}
U^\infty(\hat{x},\theta)
\hspace{-.05cm}&=&\hspace{-.1cm}\sum_{m=1}^{M}e^{-i\kappa\hat{x}\cdot z_m}Q_m\hspace{-.03cm}+\hspace{-.03cm}O\left(a^{2-s}\hspace{-.03cm}+\hspace{-.03cm}a^{3-s-5t+3t\alpha}\hspace{-.03cm}+
\hspace{-.03cm}a^{4-s-9t+6t\alpha}\hspace{-.03cm}+\hspace{-.03cm}a^{3-2s-2t\alpha}\hspace{-.03cm}+\hspace{-.03cm}a^{4-3s-3t\alpha}\hspace{-.03cm}+\hspace{-.03cm}a^{4-2s-5t+2t\alpha}\right).\nonumber\\
 \end{eqnarray}
As the diameter $a$ tends to zero the error term tends to zero for $t$ and $s$ such that %the case 
\begin{equation}\label{general-condion-s-t}
0<t<1\; \mbox{ and } 0<s<\min\{2(1-t),\,\frac{7-5t}{4},\,\frac{12-9t}{7},\frac{20-15t}{12},\frac{4}{3}-t\alpha\}.
\end{equation}
Observe that we have the upper bound
\begin{equation}
 \vert \sum_{m=1}^{M}e^{-i\kappa\hat{x}\cdot z_m}Q_m\vert \leq M\sup_{m=1, ..., M}\vert Q_m\vert=O(a^{1-s}) 
\end{equation}
since $Q_m\approx a$, see \cite{C-S:2014}. Hence if the number of obstacles is $M:=M(a):=a^{-s}, \; s<1$ and $t$ satisfies (\ref{general-condion-s-t}), $a\rightarrow 0$, then from (\ref{x oustdie1 D_m farmain-recent**}), we deduce that
\begin{equation}\label{s-smaller-1}
 U^\infty(\hat{x},\theta)\rightarrow 0, \mbox{ as } a \rightarrow 0, \mbox{ uniformly in terms of } \theta \mbox{ and } \hat{x} \mbox{ in } \mathbb{S}^2.
\end{equation}
This means that this collection of obstacles has no effect on the homogeneous medium as $a \rightarrow 0$. 
The main concern of this paper is to consider the case when $s=1$. To start, let $\Omega$ be a bounded domain, say of unit volume, containing the obstacles $D_m, m=1, ..., M$. 
We shall divide $\Omega$ into $[a^{-1}]$ sub-domains $\Omega_m,\; m=1, ..., [a^{-1}]$ such that each 
$\Omega_m$ contains $D_m$, with $z_m \in \Omega_m$ as its center, and some of the other $D_j$'s. It is natural then to assume that the number of obstacles in $\Omega_m$, 
for $m=1, ..., [a^{-1}]$, to be uniformly bounded in terms of $m$. To describe correctly this number of obstacles, we introduce 
 $K: \mathbb{R}^3\rightarrow \mathbb{R}$ as a positive continuous and bounded function. 
 Let each $\Omega_m$, $m\in \mathbb{N}$, be a cube such that $\Omega_m \cap \Omega$ (which we denote also by $\Omega_m$) is of volume $a\frac{[K(z_m)+1]}{K(z_m)+1}$
 and contains $[K(z_m) +1]$ obstacles (where $[a]$ stands for the entire part of $a\in \mathbb{R}$). 
We set $K_{max}:=\sup_{z_m}(K(z_m)+1)$, hence $M=\sum^{[a^{-1}]}_{j=1}[K(z_m)+1]\leq K_{max}[a^{-1}]=O(a^{-1})$. 
\begin{figure}[htp]\label{distribution-obstacles}
\centering
\includegraphics[width=6cm,height =5cm,natwidth=610,natheight=642]{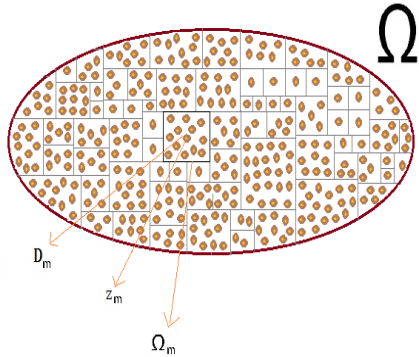}
\caption{An example on how the obstacles are distributed in $\Omega$.}
\end{figure}

\begin{remark}\label{distribution}
 We see that $\bigcup^{[a^{-1}]}_{m=1} \Omega_m \subset \Omega$. Hence $Vol(\bigcup^{[a^{-1}]}_{m=1}\Omega_m)=\sum^{[a^{-1}]}_{m=1}\frac{[K(z_m)+1]}{K(z_m)+1} a \rightarrow 
 \int_\Omega \frac{[K(z)+1]}{K(z)+1} dz$ as $a\rightarrow 0$. Then, as $a\rightarrow 0$, we have $\bigcup^{[a^{-1}]}_{m=1} \Omega_m \varsubsetneq \Omega$ if $K$ is not a function with 
 entire values! In this case, we might not fill in fully $\Omega$. To do it, one needs, for instance, to add $\frac{1}{L}[a^{-1}]$ sub-domains, of the form $\Omega_m$, 
 with an appropriate integer $L$ depending on $\Omega \setminus {\lim_{a\rightarrow 0}\bigcup^{[a^{-1}]}_{m=1}\Omega_m}$. 
 To keep the presentation simple, we take $\Omega:=\lim_{a\rightarrow 0}\bigcup^{[a^{-1}]}_{m=1}\Omega_m$.
\end{remark}
\bigskip

We prove the following result:
\bigskip

\begin{theorem}\label{equivalent-medimu}
Let the small obstacles be distributed in a bounded domain $\Omega$, say of unit volume, with their number $M:=M(a):=O(a^{-1})$ and their minimum distance 
$d:=d(a):=a^{t}$,\; $\frac{1}{3}\leq t < \frac{5}{12}$, as $a\rightarrow 0$, as described above.
\begin{enumerate}
 \item If the obstacles are distributed as follows: $\Omega=\cup^N_{j=1}E_j$, with $N$ fixed, and each sub-domain $\Omega_m$ included $E_j$ contains $K(z_j)+1$ obstacles having the same shape (actually the same capacitance), then we have the asymptotic expansion:
 \begin{equation}\label{A}
  U^\infty(\hat{x},\theta) = U_N^\infty(\hat{x},\theta) +O(a^{\frac{1}{3}-\frac{4}{5}t}), \; a \rightarrow 0, \mbox{ uniformly in terms of } \theta \mbox{ and } \hat{x} \mbox{ in } \mathbb{S}^2,
 \end{equation}
where $U_N^\infty(\hat{x},\theta)$ is the farfield corresponding to the scattering problem

\begin{equation}
(\Delta + \kappa^{2}-K_N\bold{C}_N)U_N^{t}=0 \mbox{ in }\mathbb{R}^{3},\label{A-1}
\end{equation}
\begin{equation}
U_N^{t}=U_N^s +e^{i\kappa x\cdot \theta},  
\end{equation}
\begin{equation}
\frac{\partial U_N^{s}}{\partial |x|}-i\kappa U_N^{s}=o\left(\frac{1}{|x|}\right), |x|\rightarrow\infty, \label{radiationc-A-1}
\end{equation}
where $\bold{C}_N=0, \; \mbox{in } \mathbb{R}^{3} \setminus{\overline \Omega}$ and $\bold{C}_N=\overline{C}_j, \; \mbox{ in } E_j$, $j=1, ..., N$ and 
$\overline{C}_j\; a$ is the capacitance of the (same) obstacles included in $E_j$. Similarly $K_N=0, \; \mbox{in } \mathbb{R}^{3} \setminus{\overline \Omega}$ and 
$K_N=K(z_j)+1, \; \mbox{ in } E_j$, $j=1, ..., N$.

 \item If the obstacles are distributed arbitrary in $\Omega$, i.e. with different capacitances, then there exists a potential $\bold{C}_0 \in \cap_{p\geq 1}L^p(\mathbb{R}^{3})$ with support in $\Omega$ such that
 \begin{equation}\label{B}
  \lim_{a\rightarrow 0}U^\infty(\hat{x},\theta)= U_{0}^\infty(\hat{x},\theta) \mbox{ uniformly in terms of } \theta \mbox{ and } \hat{x} \mbox{ in } \mathbb{S}^2
 \end{equation}
where $U_{0}^\infty(\hat{x},\theta)$ is the farfield corresponding to the scattering problem

\begin{equation}
(\Delta + \kappa^{2}-(K+1)\bold{C}_0)U_{0}^{t}=0 \mbox{ in }\mathbb{R}^{3},\label{B-1}
\end{equation}
\begin{equation}
U_{0}^{t}=U_{0}^s +e^{i\kappa x\cdot \theta},  
\end{equation}
\begin{equation}
\frac{\partial U_{0}^{s}}{\partial |x|}-i\kappa U_{0}^{s}=o\left(\frac{1}{|x|}\right), |x|\rightarrow\infty. \label{radiationc-B-1}
\end{equation}
 
\item If in addition $K\mid_{\Omega}$ is in $C^{0, \gamma}(\Omega)$, $\gamma \in (0, 1]$ and the obstacles have the same capacitances \footnote{The same result holds if we consider 
the obstacles to
have locally the same capacitances, as in point 1.}, then
\begin{equation}\label{C}
  U^\infty(\hat{x},\theta)= U_0^\infty(\hat{x},\theta) +O(a^{\min\{\gamma, \frac{1}{3}-\frac{4}{5}t\}}) \mbox{ uniformly in terms of } \theta \mbox{ and } \hat{x} \mbox{ in } \mathbb{S}^2
 \end{equation}
 where $C_0=C$ in $\Omega$ and $C_0=0$ in $\mathbb{R}^{3} \setminus{\overline \Omega}$.
\end{enumerate}

\end{theorem}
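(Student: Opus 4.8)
The plan is to compare the true far field $U^\infty$ with the effective far field through the intermediate Foldy--Lax field $U^\infty_{FL}(\hat x,\theta):=\sum_{m=1}^{M}e^{-i\kappa\hat x\cdot z_m}Q_m$, splitting the error as $U^\infty-U^\infty_{FL}$ and $U^\infty_{FL}-U_0^\infty$. The first piece is already furnished by \eqref{x oustdie1 D_m farmain-recent-near}: substituting $s=1$ and $d=a^{t}$ as in \eqref{x oustdie1 D_m farmain-recent**}, together with the value of $\alpha$ forced by the clustering (each sub-cube $\Omega_m$ of volume $\sim a$ carries $K(z_m)+1$ obstacles at mutual distance $\ge a^{t}$), one checks that the dominant surviving term is $O(a^{\frac13-\frac45 t})$ precisely on the range $\frac13\le t<\frac5{12}$. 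Here $t\ge\frac13$ is exactly the packing requirement that $O(1)$ obstacles separated by $a^{t}$ fit inside a cube of side $a^{1/3}$, while $t<\frac5{12}$ keeps $\frac13-\frac45 t>0$. Everything then reduces to the second piece, i.e.\ to showing that the Foldy--Lax field is a quadrature of a Lippmann--Schwinger far field.

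To set this up I would first record the two scalings that drive the limit: the capacitance is linear in the diameter, $C_m=\overline C_m\,a$ with $\overline C_m$ the capacitance of the normalized body $B_m$, and the obstacles have local number density $(K(z)+1)/a$. Rewriting \eqref{fracqcfracmain} as $Q_m=-C_m\bigl(U^i(z_m)+\sum_{j\ne m}\Phi_\kappa(z_m,z_j)Q_j\bigr)=-C_m\,U^{t}_{\mathrm{eff}}(z_m)$ and grouping the indices cube by cube, the sum $\sum_{j\ne m}\Phi_\kappa(z_m,z_j)Q_j$, after inserting $Q_j\approx-C_j U^{t}_0(z_j)$, is a Riemann sum for $-\int_\Omega\Phi_\kappa(z_m,y)(K(y)+1)\mathbf C(y)\,U^{t}_0(y)\,dy$, so that the limiting profile $U^{t}_0$ solves the Lippmann--Schwinger equation $U^{t}_0=U^{i}-\int_\Omega\Phi_\kappa(\cdot,y)(K(y)+1)\mathbf C(y)U^{t}_0(y)\,dy$ associated with \eqref{A-1}. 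The same grouping applied to $U^\infty_{FL}$ itself produces, up to the normalization of $\Phi_\kappa$, the far field $-\tfrac1{4\pi}\int_\Omega e^{-i\kappa\hat x\cdot y}(K(y)+1)\mathbf C(y)U^{t}_0(y)\,dy$, which is exactly $U_N^\infty$ (resp.\ $U_0^\infty$), with $\mathbf C$ fixed by the capacitance normalization.

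The quantitative step treats the discrete Foldy system $(I+A)\mathbf Q=\mathbf b$, with $A_{mj}=C_m\Phi_\kappa(z_m,z_j)$, and the sampled Lippmann--Schwinger system as mutual perturbations. I would bound the matrix difference by the singular-kernel quadrature error, splitting it into a near zone (cubes within $O(a^{1/3})$ of $z_m$, where the discrete sum and the integral of the integrable kernel $|z_m-y|^{-1}$ are each bounded directly using the minimal distance $d=a^{t}$) and a far zone (smooth kernel, first-order cube quadrature of side $a^{1/3}$ giving $O(a^{1/3})$, uniformly in $\hat x,\theta$). Combined with the uniform invertibility of $I+A$ from \eqref{invertibilityconditionsmainthm} and the Fredholm invertibility of $I+\mathcal T$ (the kernel is weakly singular, so $\mathcal T$ is compact, and injectivity follows from the smallness $\mathrm{diam}(\Omega)<\pi/2\kappa$), the small matrix perturbation transfers to a small error in $\mathbf Q$, and hence in $U^\infty_{FL}$; this yields part~1 with the rate \eqref{A}.

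For part~2 I would define the piecewise-constant potential $\mathbf C_0^{(a)}$ by averaging the normalized capacitances over each sub-cube. These are bounded in $L^\infty(\Omega)$ uniformly in $a$, hence precompact in the weak-$*$ topology, yielding a limit $\mathbf C_0\in\cap_{p\ge1}L^p(\mathbb R^3)$ supported in $\Omega$; compactness of the Lippmann--Schwinger operator makes weak-$*$ convergence of the potential sufficient to pass to the limit in the equation and identify $U_0^\infty$, while a subsequence--uniqueness argument upgrades this to convergence of the whole family \eqref{B} (with no rate, since the limit need not be continuous). For part~3, with a common capacitance one has $\mathbf C_0\equiv C$, and it remains only to add the error of replacing the continuous coefficient $K+1$ by its sub-cube samples; for $K\in C^{0,\gamma}$ this oscillation is controlled on cubes of side $a^{1/3}$ and, propagated through the stable Lippmann--Schwinger solve, contributes the H\"older term, giving \eqref{C}. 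The main obstacle throughout is the singular-kernel Riemann-sum estimate made uniform in the directions, together with the stability passage from the matrix perturbation to the solution: it is precisely the balance between the $|z_m-y|^{-1}$ singularity (scale $a^{t}$) and the cube size (scale $a^{1/3}$) that forces the admissible range $\frac13\le t<\frac5{12}$.
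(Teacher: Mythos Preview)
Your overall architecture matches the paper's: split through the Foldy--Lax field, interpret the discrete system as a quadrature of the Lippmann--Schwinger equation, control the singular kernel by a near/far decomposition, transfer the right-hand-side error to the solution via the uniform invertibility of the algebraic system, and handle parts~2 and~3 by weak-$*$ compactness of the piecewise-constant capacitance and by the H\"older oscillation of $K$, respectively. So the strategy is not the problem.

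The genuine gap is in your first step. You assert that the rate $O(a^{\frac13-\frac45 t})$ is ``the dominant surviving term'' obtained by substituting $s=1$, $d=a^t$ and a geometrically forced $\alpha$ into \eqref{x oustdie1 D_m farmain-recent**}. That is not how this exponent arises. With $s=1$ the exponents in \eqref{x oustdie1 D_m farmain-recent**} are $1,\;2-5t+3t\alpha,\;3-9t+6t\alpha,\;1-2t\alpha,\;1-3t\alpha,\;2-5t+2t\alpha$, and no choice of a single ``clustering'' value of $\alpha$ turns any of these into $\frac13-\frac45 t$. In the paper, $\alpha$ is \emph{not} fixed by the geometry; it is a free parameter that sets the radius $\tfrac12 a^{\alpha/3}$ of the near zone in the Riemann-sum estimate (and simultaneously appears in the Foldy--Lax remainder). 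The quadrature errors produced by the near and far zones are, after all bookkeeping, of orders $a^{\frac{1-6t+3\alpha}{3}}$ and $a^{\frac{1-2\alpha}{3}}$ respectively, and the rate $\frac13-\frac45 t$ is obtained by \emph{balancing} these two, i.e.\ choosing $\alpha=\tfrac{6}{5}t$. Your proposal instead fixes the near zone at scale $a^{1/3}$; with that choice the far-zone gradient bound $|\nabla_y\Phi_\kappa(z_m,y)|\lesssim a^{-2/3}$ makes the first-order quadrature error over $\Omega$ of order $a^{1/3}\cdot a^{-2/3}=a^{-1/3}$, which diverges. So you do need an intermediate, $\alpha$-dependent near/far radius and the subsequent optimization; without it neither the stated exponent nor the admissible window $\frac13\le t<\frac{5}{12}$ is justified. (The upper endpoint $\frac{5}{12}$ is precisely where $\frac13-\frac45 t$ vanishes after this optimization, not a packing constraint.)

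A second, smaller point: the stability you invoke to pass from the perturbed right-hand side to the solution is exactly the paper's Lemma on the algebraic system (the $\ell^1$ bound on $(I+\mathbf B)^{-1}$), not a generic ``small matrix perturbation'' of $A$; you should state it that way, since the columnwise smallness of $A$ itself is not what you control. With the $\alpha$-optimization inserted and this stability lemma used as the transfer mechanism, your argument becomes the paper's.
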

\bigskip

The interesting observation behind such results is the 'equivalent' behaviour between a collection of, appropriately dense, small holes (or impenetrable obstacles)
and an extended penetrable obstacle modeled by an additive potential. Such an observation goes back at least to the works by Cioranescu and Murat \cite{C-M:1979, C-M:1997} and also
the reference therein. Their analysis, made for the Poisson problem, is based on homogenization via energy methods and, in particular, they assume that the obstacles are distributed periodically. 
More elaborated expositions on the homogenization theory applied to related problems can be found in the books \cite{B-L-P:1978} and \cite{J-K-O:1994}. 
\bigskip

In the results presented here, we do not need such periodicity and no homogenization is used. Instead, we first use integral equation methods to derive the asymptotic expansion 
(\ref{x oustdie1 D_m farmain-recent**}), which is deduced from \cite{C-S:2014}, and second we analyze the limit, as $a$ goes to zero, of the dominant term 
$\sum_{m=1}^{M}e^{-i\kappa\hat{x}\cdot z_m}Q_m$ when $M:=M(a):=O(a^{-1})$ recalling that $Q_m, m=1, ..., M$ solves the Foldy-Lax algebraic system (\ref{fracqcfracmain}). 
The main ingredients in this analysis are related to the invertibility properties of this last algebraic system derived in \cite{C-S:2014}, see subsection \ref{FL-system} below, 
and the precise treatment of the summation in the formentioned dominant term. As we can see in Theorem \ref{equivalent-medimu}, the equivalent term (or the strange term recalling
the terminology of Cioranescu and Murat) is composed of two terms. The first one, $K+1$, models the local number of the distributed obstacles while the second one, $\bold C_0$, models
their geometry. In the situation discussed in \cite{C-M:1979, C-M:1997}, and other references, the coefficient $K$ is reduced to zero since locally they have only one obstacle 
and then $t=\frac{1}{3}$, see Figure \ref{distribution-obstacles}. It happens that this coefficient can have interesting applications in the theory of acoustic materials.
Indeed, perforating a given domain by a set of holes 
having the same shape (balls for instance) and distributed in an appropriate way following a given function $K$, see the paragraph before Remark \ref{distribution}, 
then the asymptotic expansion in (\ref{B}) says that
the farfield generated by such a collection is equivalent to the one corresponding to an acoustic medium having $n(x):=\sqrt{1-\frac{(K+1)\bold C_0}{\kappa}}$ 
as an index of refraction. We can also use the geometry, i.e. the coefficient $\bold{C}_0$, instead of $K$ to derive the same conclusion. 
The error estimates in (\ref{C}) measures the error between the scattered fields generated by the perforated medium and the ones related to the refraction index.
In other words, these estimates measure the accuracy in the design, by perforation, of acoustic materials with desired refraction index.

\bigskip

Let us make some additional comments on these results related to the inversion theory. 
Let the small scatterers model small anomalies (i.e. tumors). Saying that the collection of the scatterers is dense ($M$ is large, $a$ and $d$ are small), means that the tumor propagates 
and becomes an advanced one. In this case, the equivalent medium\footnote{ We choose the terminology 'equivalent medium' instead of 'effective medium'. 
The reason is that we only estimate the farfields (or the scattered fields away from the location of the obstacles). In effective medium theory, usually we derive the limit of the energy everywhere.
}  is what we could see from the measurements collected far away.
If we have access to the measured farfields corresponding to the distributed obstacles described above, then the equivalent 
medium can be described and quantified by solving the inverse potential scattering problem $\{U_{0}^\infty(\hat{x},\theta), \; \theta \mbox{ and } \hat{x} \mbox{ in } \mathbb{S}^2\} 
\rightarrow (K+1)\bold{C}_0$.  
In this case the error estimate in (\ref{C}) added to the stability estimate of the inverse scattering problem help to reconstruct the effective medium from these measurements.
This inverse problem is quite well studied using the methods introduced in \cite{Nachman:1988, Novikov:1988, Ramm:1988} for instance.

\bigskip

A result similar to (\ref{B}) is also derived by Ramm in several of his papers, see for instance \cite{RAMM:2007}, where in addition to some formal arguments, he needs some extra 
assumptions on the distribution of the obstacles to ensure the validity of some integral formulas. The additional contribution of our work compared to his results is
that we provide asymptotics expansions with explicit error estimates, as in (\ref{C}).

\bigskip

So far we studied the case when $M$ is of the order $a^{-s},\; s \in [0,1]$, as $a\rightarrow 0$. We finish this introduction by claiming that, in the case when it is of the order
$a^{-s},\; s>1$, the equivalent medium is the exterior impenetrable and soft obstacle $\Omega$. However, we think that its justification and the corresponding error estimates 
are out reach by the mathematical tools we use in this paper. In a forthcoming work, we will analyze this situation and quantify the corresponding error estimates. 
 
\bigskip

The rest of the paper is devoted to the proof of Theorem \ref{equivalent-medimu}. We proceed as follows. 
In subsection \ref{FL-system}, we recall the  invertibility of the algebraic system \eqref{fracqcfracmain} derived from \cite{C-S:2014}. 
Then, in subsection \ref{subsection-piecewise-constant}, we deal with the case when the coefficient $K$ is piecewise constant and the osbtacles are locally the same, 
i.e. $\bold{C}$ is piecewise constant, by dividing $\Omega$ into $N$ regions $E_j$, $j=1, ..., N$. In section \ref{arbitrarely-distributed}, we apply the results of section \ref{subsection-piecewise-constant}
to the case when $N=[a^{-1}]$ and $E_j=\Omega_j$, $j=1,..., N$, and then pass to the limit $a\rightarrow 0$. In section \ref{smoothly-distributed}, we deal as in 
section \ref{subsection-piecewise-constant} using the H$\ddot{\mbox{o}}$lder regularity of $K$.

\section{Proof of the results}\label{main-section}
\subsection{Invertibility properties of the Foldy-Lax algebraic system}\label{FL-system}
 We can rewrite the algebraic system \eqref{fracqcfracmain} as follows;

\begin{eqnarray}\label{fracqcfracmain-effect}
 Y_m +\sum_{\substack{j=1 \\ j\neq m}}^{M} \Phi_\kappa(z_m,z_j)\bar{C}_j Y_j a&=&-U^{i}(z_m, \theta),
\end{eqnarray}
with $Y_m:=\frac{Q_m}{C_m}$ and $C_m:=\bar{C}_m a$, for $m=1,\dots,M$ and $\bar{C}_m$ are the capacitances of $B_m$'s, i.e. they are independent of $a$. We set $\hat{C}:=(\bar{C}_1,\bar{C}_2,\dots,\bar{C}_M)^\top$ 
and define
\begin{eqnarray*}
\mathbf{B}\hspace{-.2cm}:=\hspace{-.2cm}\left(\begin{array}{ccccc}
   -1 &-\bar{C}_2a\Phi_\kappa(z_1,z_2)&-\bar{C}_3a\Phi_\kappa(z_1,z_3)&\cdots&-\bar{C}_Ma\Phi_\kappa(z_1,z_M)\\
-\bar{C}_1a\Phi_\kappa(z_2,z_1)&-1&-\bar{C}_3a\Phi_\kappa(z_2,z_3)&\cdots&-\bar{C}_Ma\Phi_\kappa(z_2,z_M)\\
 \cdots&\cdots&\cdots&\cdots&\cdots\\
-\bar{C}_1a\Phi_\kappa(z_M,z_1)&-\bar{C}_2a\Phi_\kappa(z_M,z_2)&\cdots&-\bar{C}_{M-1}a\Phi_\kappa(z_M,z_{M-1}) &-1
   \end{array}\right),\label{mainmatrix-acoustic-small}\\
\nonumber\\
 \hspace{-4cm}{\hat{Y}}:=\left(\begin{array}{cccc}
    Y_1 & Y_2 & \ldots  & Y_M
   \end{array}\right)^\top \text{ and } 
\mathrm{U}^I:=\left(\begin{array}{cccc}
     U^i(z_1) & U^i(z_2)& \ldots &  U^i(z_M)
   \end{array}\right)^\top.\hspace{2cm}\label{coefficient-and-incidentvectors-acoustic-small}
\end{eqnarray*}

The following lemma insures the invertibility of the algebraic system (\ref{fracqcfracmain-effect}).

\begin{lemma}\label{Mazyawrkthm-effect}%[Based on work of Maz'ya]
If $a<\frac{5\pi}{3}\frac{d}{\|\hat{C}\|}$ and $t:=\min\limits_{j\neq\,m,1\leq\,j,m\leq\,M}\cos(\kappa|z_m-z_j|) \geq 0$, then the matrix $\mathbf{B}$ 
is invertible and the solution vector $\hat{Y}$ of \eqref{fracqcfracmain-effect} satisfies the estimate
\begin{equation}\label{mazya-fnlinvert-small-ac-2-effect}
 \sum_{m=1}^{M}|Y_m|^{2}
\leq4\left(1-\frac{3ta}{5\pi\,d}\|\hat{C}\|\right)^{-2}\sum_{m=1}^{M}\left|U^i(z_m)\right|^2,
\end{equation}
and hence the estimate
\begin{equation}\label{mazya-fnlinvert-small-ac-3-effect}
\begin{split}
 \sum_{m=1}^{M}|Y_m|
\leq2\left(1-\frac{3ta}{5\pi\,d}\|\hat{C}\|\right)^{-1}M\max\limits_{1\leq m \leq M}\left|U^i(z_m)\right|.
\end{split}
\end{equation}
\end{lemma}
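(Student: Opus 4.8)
The plan is to treat \eqref{fracqcfracmain-effect} as a perturbation of the identity and to derive the a priori bound \eqref{mazya-fnlinvert-small-ac-2-effect} directly by an energy (quadratic-form) argument in the spirit of Maz'ya; invertibility of $\mathbf{B}$ then comes for free, because the very same bound applied to the homogeneous problem ($\mathrm{U}^I=0$) forces $\hat{Y}=0$, and a square matrix with trivial kernel is invertible. So I would prove the estimate and the invertibility in one stroke.

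First I would multiply the $m$-th equation of \eqref{fracqcfracmain-effect} by $\overline{Y_m}$, sum over $m=1,\dots,M$, and take real parts. The diagonal produces $\sum_m|Y_m|^2=\|\hat{Y}\|^2$, the right-hand side contributes $-\operatorname{Re}\sum_m U^i(z_m)\overline{Y_m}$, which is bounded in modulus by $\|\mathrm{U}^I\|\,\|\hat{Y}\|$ via Cauchy--Schwarz, and the coupling produces the cross term $a\operatorname{Re}\sum_m\sum_{j\neq m}\Phi_\kappa(z_m,z_j)\bar{C}_j Y_j\overline{Y_m}$. Everything hinges on controlling this cross term, the aim being an inequality of the form $a\,|\text{cross term}|\le \tfrac{3ta}{5\pi d}\|\hat{C}\|\,\|\hat{Y}\|^2$.

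To obtain that bound I would combine three ingredients. First, the hypothesis $t=\min_{j\neq m}\cos(\kappa|z_m-z_j|)\ge 0$ fixes the behaviour of the real part $\operatorname{Re}\Phi_\kappa(z_m,z_j)=\tfrac{\cos(\kappa|z_m-z_j|)}{4\pi|z_m-z_j|}$ and is precisely what prevents the off-diagonal coupling from destroying the diagonal dominance; the constant $t$ is carried along from these cosine factors. Second, a Cauchy--Schwarz splitting that groups the capacitances together so that the Euclidean norm $\|\hat{C}\|=(\sum_j\bar{C}_j^2)^{1/2}$ appears, rather than a crude maximum over $j$. Third, the minimum-distance hypothesis $|z_m-z_j|\ge d$ together with a packing count of the points $z_j$ in concentric shells around a fixed $z_m$, which is what converts the singular near-field kernel $\tfrac{1}{4\pi|z_m-z_j|}$ into the geometric factor $\tfrac{3}{5\pi d}$. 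Collecting these gives, up to the symmetrization of the double sum that accounts for the constant $4$ (resp.\ $2$) in \eqref{mazya-fnlinvert-small-ac-2-effect}, the inequality $\bigl(1-\tfrac{3ta}{5\pi d}\|\hat{C}\|\bigr)\|\hat{Y}\|^2\le \|\mathrm{U}^I\|\,\|\hat{Y}\|$.

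Under the smallness assumption $a<\tfrac{5\pi}{3}\tfrac{d}{\|\hat{C}\|}$ (equivalently $\tfrac{3ta}{5\pi d}\|\hat{C}\|<1$, since $t\le 1$) the prefactor is strictly positive, so dividing by $\|\hat{Y}\|$ yields $\|\hat{Y}\|\le 2\bigl(1-\tfrac{3ta}{5\pi d}\|\hat{C}\|\bigr)^{-1}\|\mathrm{U}^I\|$, which is \eqref{mazya-fnlinvert-small-ac-2-effect} after squaring, and the homogeneous case gives the invertibility of $\mathbf{B}$. The $\ell^1$-type estimate \eqref{mazya-fnlinvert-small-ac-3-effect} then follows from \eqref{mazya-fnlinvert-small-ac-2-effect} by one further Cauchy--Schwarz step, $\sum_m|Y_m|\le\sqrt{M}\bigl(\sum_m|Y_m|^2\bigr)^{1/2}$, combined with $\sum_m|U^i(z_m)|^2\le M\max_m|U^i(z_m)|^2$, which reproduces the factor $M$. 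I expect the decisive difficulty to be the third ingredient: extracting the sharp geometric constant $\tfrac{3}{5\pi}$ from the minimum-distance packing while simultaneously keeping the weighting that produces the Euclidean norm $\|\hat{C}\|$ and not introducing spurious powers of $M$. The sign condition $t\ge 0$ is exactly what makes such a clean, essentially $M$-independent bound possible.
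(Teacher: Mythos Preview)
The paper does not actually prove this lemma in the text; its entire proof reads ``The proof of this lemma can be found in \cite{C-S:2014}.'' So there is no in-paper argument to compare your proposal against.

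That said, your sketch is a plausible reconstruction of the argument one would expect in the cited reference. The quadratic-form (energy) method---multiply the $m$-th equation by $\overline{Y_m}$, sum, take real parts, and control the off-diagonal coupling---is the natural route for such perturbed-identity Foldy--Lax systems, and the three ingredients you isolate (the sign hypothesis $t\ge 0$ controlling $\operatorname{Re}\Phi_\kappa$, a Cauchy--Schwarz step that produces the Euclidean norm $\|\hat C\|$, and a minimum-distance packing count yielding the geometric factor) are exactly what is needed. The passage from \eqref{mazya-fnlinvert-small-ac-2-effect} to \eqref{mazya-fnlinvert-small-ac-3-effect} via $\sum_m|Y_m|\le\sqrt{M}\,\|\hat Y\|$ and $\|\mathrm U^I\|\le\sqrt{M}\max_m|U^i(z_m)|$ is correct as written. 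The only part you leave genuinely open is the explicit bookkeeping that pins down the constant $\tfrac{3}{5\pi}$ and the factor $2$; you have correctly identified this shell-counting step as the crux, but carrying it out would require the detailed computation from \cite{C-S:2014}.
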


The proof of this lemma can be found in \cite{C-S:2014}.

\subsection{Case when the obstacles are locally the same}\label{subsection-piecewise-constant} 
 
We define a bounded function $K^M: \mathbb{R}^3\rightarrow \mathbb{R}$ as follows: 
\begin{equation}
K^M(x):= K^M(z_m):=
\left\{\begin{array}{ccc}
K(z_m)+1&   \mbox{ if }& x\in \Omega_m \\
0 & \mbox{ if }& x\notin \Omega_m \mbox{ for any } m=1,\dots,[a^{-1}].
\end{array}\right.
\end{equation}
Hence each $\Omega_m$ contains $[ K^M(z_m)]$ obstacles and $K_{max}:=\sup_{z_m}K^M(z_m)$.
 
Let  ${C^{M}}$ be a piecewise constant function such that ${C^{M}}\vert_{\Omega_m}=\bar{C}_m$ for all $m=1,\dots,M$ and vanishes outside $\Omega$. 
We assume that there exists a finite family of $E_j, j=, 1, ..., N$ such that $\Omega=\cup^N_{j=1} E_j$ and that each $E_j$ contains the same obstacles. 
Hence it is clear that ${C^{M}}$ is piecewise constant, i.e. 
 ${C^{M}}\arrowvert_{E_j}=\bar{C}^j\,\big(\in\{\bar{C}_m\}_{m=1}^{M}\big)$, a constant. 
Let, for each $j=1,\dots, N$, $E_j$ contains $N_j$ cubes $\{\Omega^j_l\}_{l=1}^{N_j}\,\big(\subset\{\Omega_m\}_{m=1}^{M}\big)$, 
which contains the obstacles $\{D^j_l\}_{l=1}^{N_j}\,\big(\subset\{D_m\}_{m=1}^{M}\big)$,
associated to the same reference body, i.e. ,
\begin{eqnarray}
 &\sum_{j=1}^{N}N_j=M,&\label{def1verxi}\\
 %\quad \mbox{and} \quad
& B^j_{l1}=B^j_{l2} \mbox{ for } l1,l2\in\{1,\dots, N_j\}& \mbox{ with }  \{B^j_l\}_{l=1}^{N_j}\subset\{B_m\}_{m=1}^{M}.\label{def2verxi}\\
 \mbox{and }& \{\bar{C}^j\}_{j=1}^{N}=\{\bar{C}^M\}_{m=1}^{M}={}_{j=1}^{N}\{\bar{C}^{j}_{l_j}\}_{l_j=1}^{N_j}.
\end{eqnarray}

We set $\mathbf{C}:=\max\limits_{1\leq{m}\leq{M}}\bar{C}_m$, then we have 
\begin{eqnarray}
 \mathbf{C}&=&\max\limits_{1\leq{j}\leq{N}}\max\limits_{1\leq {l} \leq {N_j} }\bar{C}^{j}_{l}\nonumber\\
 &=&\max\limits_{1\leq{j}\leq{N}}\bar{C}^{j}_{l_j}\quad \mbox{for each}\quad l_j\in\{1,\dots, N_j\}\nonumber\\
  &=&\max\limits_{1\leq{j}\leq{N}}\bar{C}^{j}.
\end{eqnarray}

% It can be noticed that the volume of $\Omega$ is $Ma$ and hence for the unit volume of $\Omega$,  we get $\textcolor{black}{M=O(a^{-1})}$.
% Since the obstacles have volumes of the order $a^3$, then one can keep at most $a^{-\frac{1}{3}}$ number of obstacles in a straight line of unit length. 
% We can also observe that $\textcolor{black}{a^{\frac{1}{3}}\geq d\geq{a^{\frac{1}{3}}-a}}$ (WHY?), % $\textcolor{black}{a^{\frac{1}{3}}\geq \,d}$
% which implies that $\textcolor{black}{d=\Theta(a^{\frac{1}{3}})}$. 

\bigskip\par Consider the Lippmann-Schwinger equation
\begin{eqnarray}\label{fracqcfracmain-effect-int}
 Y(z) +\int_{\Omega} \Phi_\kappa(z,y)K^M(y){C^{M}}(y) Y(y) dy &=&-U^{i}(z, \theta), z\in \Omega
\end{eqnarray}
and define
\begin{eqnarray}
 V(Y)(x):=\int_{\Omega}\Phi_\kappa(x,y)K^M(y){C^{M}}(y)Y(y)dy,\qquad x\in\mathbb{R}^3,
\end{eqnarray}
then we can show that $V:{L}^2(\Omega)\rightarrow {H}^2(\varOmega)$ is a bounded operator for any bounded domain $\varOmega$ in $\mathbb{R}^3$, see \cite{C-K:1998}, and  in particular there exists a positive 
constant $c_0$ such that
\begin{eqnarray}\label{H2normofY}
\|V(Y)\|_{H^{2}(\Omega)}\leq c_0 \|Y\|_{L^{2}(\Omega)}.
%\mbox{ and }& \|Y\|_{H^{2}(\Omega_m)}\leq c_0 \|Y\|_{L^{2}(\Omega_m)}
\end{eqnarray} 
% for some constant $c_0$. 
%for $m=1,\dots,M$.
 We have also the following lemma
 \bigskip
 
\begin{lemma}\label{invertibility-of-VC}
 There exists one and only one solution $Y$ of the Lippmann-Schwinger equation (\ref{fracqcfracmain-effect-int}) and it satisfies the estimate
 \begin{eqnarray}\label{est-Lipm-Sch}
  \Vert Y\Vert_{L^\infty(\Omega)}\leq C \Vert U^i\Vert_{H^2(\Omega)}& \mbox{ and }& \Vert \nabla Y\Vert_{L^\infty(\Omega)}\leq C^{\prime} \Vert U^i\Vert_{H^2(\tilde{\Omega})},
 \end{eqnarray}
where $\tilde{\Omega}$ being a large bounded domain which contains $\bar{\Omega}$.
\end{lemma}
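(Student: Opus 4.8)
The plan is to read the Lippmann--Schwinger equation \eqref{fracqcfracmain-effect-int} as a second-kind Fredholm equation $(I+V)Y=-U^i$ on $L^2(\Omega)$, where $V$ is the volume-potential operator with density $K^M C^M Y$. First I would establish that $V$ is compact on $L^2(\Omega)$: by \eqref{H2normofY} the map $V\colon L^2(\Omega)\to H^2(\Omega)$ is bounded, and since $\Omega$ is a bounded domain the embedding $H^2(\Omega)\hookrightarrow L^2(\Omega)$ is compact by the Rellich--Kondrachov theorem; composing the two gives compactness of $V\colon L^2(\Omega)\to L^2(\Omega)$. Consequently $I+V$ is Fredholm of index zero, so the existence of a unique solution together with the boundedness of $(I+V)^{-1}$ will follow from the Fredholm alternative once injectivity is shown.

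For injectivity, suppose $(I+V)Y_0=0$, i.e. $Y_0=-V(Y_0)$. Then $w:=V(Y_0)$, viewed as a function on all of $\mathbb{R}^3$, is the volume potential of $K^M C^M Y_0$ and hence satisfies $(\Delta+\kappa^2)w=-K^M C^M Y_0$ together with the Sommerfeld radiation condition, because $\Phi_\kappa$ is the outgoing fundamental solution. Since $w=-Y_0$ on $\Omega$ and the density $K^M C^M$ is supported in $\Omega$, this rewrites as $(\Delta+\kappa^2-K^M C^M)w=0$ in $\mathbb{R}^3$ with $w$ radiating. As the potential $q:=K^M C^M$ is real (indeed nonnegative), Green's identity on a large ball $B_R$ forces the imaginary part of $\int_{\partial B_R}\bar w\,\partial_\nu w$ to vanish, whence $\int_{\partial B_R}|w|^2\to 0$; Rellich's lemma then gives $w\equiv 0$ outside a large ball, and unique continuation (valid since $q\in L^\infty$) propagates this to $w\equiv 0$ on $\mathbb{R}^3$, so $Y_0=-w=0$. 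I expect this uniqueness step to be the main obstacle, since it rests on the interplay between the radiation condition (via Rellich's lemma) and unique continuation rather than on any soft estimate.

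With invertibility in hand, the $L^\infty$-bound follows by combining the $L^2$ estimate $\|Y\|_{L^2(\Omega)}\le C\|U^i\|_{L^2(\Omega)}$ coming from the bounded inverse with the regularity \eqref{H2normofY}: writing $Y=-U^i-V(Y)$ and using $\|V(Y)\|_{H^2(\Omega)}\le c_0\|Y\|_{L^2(\Omega)}$ together with the three-dimensional Sobolev embedding $H^2(\Omega)\hookrightarrow L^\infty(\Omega)$ yields $\|Y\|_{L^\infty(\Omega)}\le\|U^i\|_{L^\infty(\Omega)}+\|V(Y)\|_{L^\infty(\Omega)}\le C\|U^i\|_{H^2(\Omega)}$.

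For the gradient estimate I would bootstrap. Having established $Y\in L^\infty(\Omega)$, the density $qY=K^M C^M Y$ lies in $L^p(\Omega)$ for every $p<\infty$ (note that $q$ is only piecewise constant, so one cannot use $H^2$-regularity here and must instead invoke the Calder\'on--Zygmund $L^p$-theory for the volume potential), which gives $V(Y)\in W^{2,p}(\Omega)$ with $\|V(Y)\|_{W^{2,p}(\Omega)}\le C_p\|Y\|_{L^\infty(\Omega)}$. Choosing $p>3$ and using $W^{2,p}(\Omega)\hookrightarrow C^{1,1-3/p}(\overline{\Omega})$ controls $\|\nabla V(Y)\|_{L^\infty(\Omega)}$ by $\|Y\|_{L^\infty(\Omega)}$. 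Differentiating the equation as $\nabla Y=-\nabla U^i-\nabla V(Y)$ and bounding $\|\nabla U^i\|_{L^\infty(\Omega)}$ by interior elliptic estimates for the Helmholtz solution $U^i$ on the slightly larger domain $\tilde{\Omega}$ (which is exactly why $\tilde{\Omega}\supset\overline{\Omega}$ enters the statement) then gives $\|\nabla Y\|_{L^\infty(\Omega)}\le C'\|U^i\|_{H^2(\tilde{\Omega})}$, as claimed.
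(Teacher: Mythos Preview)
Your proposal is correct and follows essentially the same strategy as the paper: Fredholm alternative on $L^2$ (you spell out the injectivity via Rellich's lemma and unique continuation, which the paper simply defers to \cite{C-K:1998}), then the $H^2\hookrightarrow L^\infty$ bootstrap for the first estimate, and an $L^p$ bootstrap with the Sobolev embedding for $p>3$ for the gradient bound. The only cosmetic difference is that for the gradient the paper extends $Y$ to a function $\bar Y$ on $\mathbb{R}^3$ satisfying $\Delta\bar Y=-(\kappa^2-K^MC^M)\bar Y$ and then invokes interior $W^{2,p}$ elliptic estimates on $\Omega\subset\tilde\Omega$, whereas you apply Calder\'on--Zygmund directly to the volume potential $V(Y)$; both routes yield the same $W^{2,p}(\Omega)$ control.
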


\begin{proof}{\it{of Lemma \ref{invertibility-of-VC}}}

The proof of the existence and uniqueness is guaranteed by the Fredholm alternative applied to $I +V:L^2(\Omega) \longmapsto L^2(\Omega)$, see \cite{C-K:1998} for instance. 
Let us derive the estimates in (\ref{est-Lipm-Sch}). From the invertibility of the equation (\ref{fracqcfracmain-effect-int}) from $L^2(\Omega)$ to $L^2(\Omega)$, we deduce that 
$\Vert Y\Vert_{L^2(\Omega)}\leq c_1 \Vert U^i\Vert_{L^2(\Omega)}$. In addition 
\begin{eqnarray}
\Vert Y\Vert_{H^2(\Omega)} & \leq & \Vert V(Y)\Vert_{H^2(\Omega)}+ \Vert U^i\Vert_{H^2(\Omega)}\nonumber\\
& \leq &  c_0 \Vert Y\Vert_{L^2(\Omega)}+ \Vert U^i\Vert_{H^2(\Omega)}\; (\mbox{ using } (\ref{H2normofY}))\nonumber\\
& \leq & C \Vert U^i\Vert_{H^2(\Omega)} (\mbox{ by the invertibility of } (\ref{fracqcfracmain-effect-int}) \mbox{ in }  L^2(\Omega)).
\end{eqnarray}
The proof of the first part ends by the Sobolev embedding $H^2(\Omega) \subset L^\infty(\Omega)$.
\bigskip

Let us prove the second part. From \eqref{fracqcfracmain-effect-int}, it can be shown that $\bar{Y}$ satisfies the partial differential equation;
\begin{eqnarray}\label{poissonY}
 \Delta \bar{Y} &=& -(\kappa^2-K^M{C^{M}}) \bar{Y} \quad \mbox{in}\quad \mathbb{R}^3  
\end{eqnarray}
where, with an abuse of notation, 
\begin{eqnarray}
 \bar{Y}(x):=\left\{\begin{array}{ccc}
                  Y(x)      &\quad    \mbox{in}&\quad \Omega \\
                  -\int_{\Omega}\Phi_\kappa(x,y)K^M(y){C^{M}}(y)Y(y)dy-U^i  &\quad    \mbox{in}&\quad \mathbb{R}^3\setminus\overline{\Omega}. \\
                 \end{array}
                 \right.
\end{eqnarray}

Let $\tilde{\Omega}$ be a large bounded domain which contains $\bar{\Omega}$, then, by the interior estimates we deduce from (\ref{poissonY}) that 
 there exist a constant $c_3$ such that
\begin{eqnarray}\label{interY}
 \|Y\|_{W^{2,p}(\Omega)} \leq&c_4\|Y\|_{L^p(\tilde{\Omega})}
\end{eqnarray}
where $c_4:=(\kappa^2+K_{max}\mathbf{C})$. Again from the boundedness of the operator $V$, one can obtain $\|Y\|_{L^p(\tilde{\Omega})}\leq c_p\|Y\|_{H^2(\tilde{\Omega})}\leq c_pc_0\|Y\|_{L^2({\Omega})} +
c_p\Vert U^i\Vert_{H^2(\tilde\Omega)}$ for $p>0$. It allows us to write \eqref{interY} as

\begin{eqnarray}\label{interY-1}
  \|Y\|_{W^{2,p}(\Omega)}&\leq&c_4c_pc_0\|Y\|_{L^2({\Omega})} + c_4c_p\Vert U^i\Vert_{H^2(\tilde\Omega)}\nonumber\\
                        &\leq&c_4c_pc_0|\Omega|^{\frac{1}{2}}\|Y\|_{L^\infty({\Omega})}+c_4c_p\Vert U^i\Vert_{H^2(\tilde\Omega)} \nonumber\\
                        &\leq& c_5 C \Vert U^i\Vert_{H^2(\tilde\Omega)}                      
\end{eqnarray}
 where $c_5:=c_4c_p\left(c_0|\Omega|^{\frac{1}{2}}+\frac{1}{C}\right)$. The Sobolev embeding $W^{1, p}(\Omega) \subset L^\infty(\Omega)$, for $p>3$, implies that
\begin{eqnarray}\label{interY-2}
  \|\nabla Y\|_{L^{\infty}(\Omega)}\leq\|\nabla Y\|_{W^{1,p}(\Omega)}\leq\| Y\|_{W^{2,p}(\Omega)}
  %\leq\|\nabla Y\|_{W^{1,p}(\Omega)}\leq\|Y\|_{W^{2,p}(\Omega)}.
\end{eqnarray}

for $p>3$. Hence, \eqref{interY-1} and \eqref{interY-2} give us the estimate

\begin{eqnarray}\label{interY-3}
  \|\nabla Y\|_{L^{\infty}(\Omega)}\leq C^{\prime}\Vert U^i\Vert_{H^2(\tilde\Omega)}.
\end{eqnarray}

\end{proof}

 Write $c_1:=C \Vert U^i\Vert_{H^2(\tilde\Omega)}$, then we rewrite the estimates given in \eqref{est-Lipm-Sch} as below
  \begin{eqnarray}\label{est-Lipm-Sch-1}
  \Vert Y\Vert_{L^\infty(\Omega)}\leq c_1& \mbox{ and }& \Vert \nabla Y\Vert_{L^\infty(\Omega)}\leq c_5c_1.
 \end{eqnarray}

Observe that, for $m=1,\dots,M$, equation \eqref{fracqcfracmain-effect-int} can be rewritten as
\begin{eqnarray}\label{fracqcfracmain-effect-int-1}
 Y(z_m) +\sum_{\substack{j=1 \\ j\neq m}}^{M} \Phi_\kappa(z_m,z_j)\bar{C}_j Y(z_j) a&=&-U^{i}(z_m, \theta)\\
 && +\sum_{\substack{j=1 \\ j\neq m}}^{M} \Phi_\kappa(z_m,z_j)\bar{C}_j Y(z_j) a-\sum_{\substack{j=1 \\ j\neq m}}^{[a^{-1}]} \Phi_\kappa(z_m,z_j)K^M(z_j)\bar{C}_j Y(z_j) Vol(\Omega_j)\nonumber\\
 &&+\sum_{\substack{j=1 \\ j\neq m}}^{[a^{-1}]} \Phi_\kappa(z_m,z_j)K^M(z_j)\bar{C}_j Y(z_j) Vol(\Omega_j)-\int_{\Omega} \Phi_\kappa(z_m,y)K^M(y){C^{M}}(y) Y(y) dy.\nonumber
\end{eqnarray}

We set 
$$
A:=\sum_{\substack{j=1 \\ j\neq m}}^{[a^{-1}]} \Phi_\kappa(z_m,z_j)K^M(z_j)\bar{C}_j Y(z_j) Vol(\Omega_j)-\int_{\Omega} \Phi_\kappa(z_m,y)K^M(y){C^{M}}(y) Y(y) dy
$$
and 
$$
B:= \sum_{\substack{j=1 \\ j\neq m}}^{M} \Phi_\kappa(z_m,z_j)\bar{C}_j Y(z_j) a-\sum_{\substack{j=1 \\ j\neq m}}^{[a^{-1}]} \Phi_\kappa(z_m,z_j)K^M(z_j)\bar{C}_j Y(z_j) Vol(\Omega_j).
$$

\subsubsection{Estimate of $A$}
In order to evaluate this, first observe that
\begin{itemize}
 \item \begin{equation}\label{integralonomega}
            \int_{\Omega} \Phi_\kappa(z_m,y){K^{M}}(y){C^{M}}(y) Y(y) dy=\sum_{l=1}^{[a^{-1}]}\int_{\Omega_l} \Phi_\kappa(z_m,y){K^{M}}(y){C^{M}}(y) Y(y) dy.
       \end{equation}
     
 \item For $l\neq m$, we have
 \begin{eqnarray}\label{integralonomega-subelements}
  \int_{\Omega_l} \Phi_\kappa(z_m,y){K^{M}}(y){C^{M}}(y) Y(y) dy - \Phi_\kappa(z_m,z_l){K^{M}}(z_l)\bar{C}_l Y(z_l) Vol(\Omega_l) \\
  &\hspace{-3cm}=\quad{K^{M}}(z_l)\bar{C}_l\int_{\Omega_l} \left[\Phi_\kappa(z_m,y) Y(y) - \Phi_\kappa(z_m,z_l) Y(z_l)\right] dy.\nonumber
 \end{eqnarray}
  Write, $f(z_m,y)=\Phi_\kappa(z_m,y) Y(y)$. Using Taylor series, we can write
  $$f(z_m,y)-f(z_m,z_l)=(y-z_l)R_l(z_m,y),$$
  with 
  \begin{eqnarray}\label{taylorremind1}
   R_l(z_m,y)
   &=&\int_0^1\nabla_y f(z_m,y-\beta(y-z_l))\,d\beta\nonumber\\
   &=&\int_0^1\nabla_y \left[\Phi_\kappa(z_m,y-\beta(y-z_l)) Y(y-\beta(y-z_l))\right]\,d\beta\nonumber\\
   &=&\int_0^1\left[\nabla_y\Phi_\kappa(z_m,y-\beta(y-z_l))\right] Y(y-\beta(y-z_l))\,d\beta\nonumber\\
   &&+\int_0^1\Phi_\kappa(z_m,y-\beta(y-z_l))\left[\nabla_y Y(y-\beta(y-z_l))\right]\,d\beta.
  \end{eqnarray}
For $m=1,\dots,[a^{-1}]$ fixed, we distinguish between the cubes $\Omega_j$, $j\neq\,m$ which are near to $\Omega_m$ from the ones which are far from $\Omega_m$ as follows.  Let $\Omega^\prime_m$, $1\leq\,m\leq\,a^{-1}$ be the balls
of center $z_m$ and of radius $(\frac{1}{2}a^\frac{\alpha}{3})$ with $0<\alpha\leq1$. 
The cubes lying in $\Omega^\prime_m$ will fall into the category, $N_{\Omega_m}$, of near by cubes
 and the others into the category, $F_{\Omega_m}$, of far cubes to $\Omega_m$. Since  $\Omega^\prime_m$ are balls with same diameter and $\Omega_m$ are cubes of 
  volume $a\frac{[K^M(z_m)]}{K^M(z_m)}$, the number of cubes
 near by $\Omega_m$ will not exceed $\frac{\Pi}{3}a^{\alpha-1}$   
 $\left[:=\frac{\frac{4}{3}\pi\left((\frac{1}{2}a^\frac{\alpha}{3})\right)^3}{a\min\limits_m\frac{[K^M(z_m)]}{K^M(z_m)}}\right]$ observing that  $\frac{1}{2}\leq\frac{[K^M(z_m)]}{K^M(z_m)}\leq1$.
 \par To make sure that at least $\Omega_m\subset\Omega^\prime_m$ for each $m$, we need to have ${{a}^\frac{\alpha}{3}\geq\sqrt{2}{a}^\frac{1}{3}}$. i.e. $\alpha\geq2^{-\frac{3}{2}}$. 
 \par Also, we have that for the cubes $\Omega_j\in F_{\Omega_m}$, we have $dist(z_m, y)\geq \frac{1}{2}{a}^\frac{\alpha}{3}$ for all $y\in\Omega_j$ and for the cubes $\Omega_{j'}\in N_{\Omega_m}\setminus\{\Omega_m\}$, we have
 $dist(z_m, y)\geq \frac{\left(a\frac{[K^M(z_m)]}{K^M(z_m)}\right)^{\frac{1}{3}}}{2}\geq\frac{\left(\frac{a}{2}\right)^{\frac{1}{3}}}{2}\geq\frac{d}{2^\frac{4}{3}}$ for all $y\in\Omega_{j'}$. 
%  \par Suppose $A_l,A^\prime_l,B_l,B^\prime_l$ are the upper bounds of the absolute values of $C, \nabla C, Y, \nabla Y$ on $\Omega_l$ and
%  $A$ ($A^\prime,B,B^\prime$ respectively) are maximum among all $A_l$, ($A^\prime_l,B_l,B^\prime_l$  respectively).

  From the explicit form of $\Phi_\kappa$, we have $\nabla_y\Phi_\kappa(x,y)=\Phi_\kappa(x,y)\left[\frac{1}{|x-y|}-i\kappa\right]\frac{x-y}{|x-y|}, {x}\neq{y}$.   
%   Hence for $0\leq\beta\leq1$, using $dist(D_m,\partial \Omega_m)\geq \frac{d}{T}$,
  Hence, we obtain that
  \begin{itemize}
   \item for $l\neq m$ such that $\Omega_l\in F_{\Omega_m}$, we have
   \begin{eqnarray*}
   \vert\Phi_\kappa(z_m,y-\beta(y-z_l))\vert\leq\frac{1}{2\pi{{a}^\frac{\alpha}{3}}},& \mbox{ and }&\vert\nabla_y\Phi_\kappa(z_m,y-\beta(y-z_l))\vert\leq\frac{1}{2\pi{{a}^\frac{\alpha}{3}}}\left[\frac{2}{{a}^\frac{\alpha}{3}}+\kappa\right].
  \end{eqnarray*}
% \textcolor{black}{Here we can write $\frac{a^{\frac{1}{3}}}{2}$ instead of $\frac{d}{T}$ in the above.} 
 These values give us
 \begin{eqnarray}\label{taylorremind-effect-F}
 \vert R_l(z_m,y) \vert
   &\leq&\frac{1}{2\pi{{a}^\frac{\alpha}{3}}}\left(\left[\frac{2}{{a}^\frac{\alpha}{3}}+\kappa\right]\int_0^1 {\vert Y(y-\beta(y-z_l))\vert}d\beta+\int_0^1{\vert\nabla_y Y(y-\beta(y-z_l))\vert}d\beta\right).\nonumber\\
  \end{eqnarray}
  \item for $l\neq m$ such that $\Omega_l\in N_{\Omega_m}$, we have
   \begin{eqnarray*}
   \vert\Phi_\kappa(z_m,y-\beta(y-z_l))\vert\leq\frac{2^\frac{1}{3}}{2\pi{d}},& \mbox{ and }&\vert\nabla_y\Phi_\kappa(z_m,y-\beta(y-z_l))\vert\leq\frac{2^\frac{1}{3}}{2\pi{d}}\left[\frac{2^\frac{4}{3}}{d}+\kappa\right].
  \end{eqnarray*}
% \textcolor{black}{Here we can write $\frac{a^{\frac{1}{3}}}{2}$ instead of $\frac{d}{T}$ in the above.} 
 These values give us
 \begin{eqnarray}\label{taylorremind-effect}
 \hspace{-.5cm}\vert R_l(z_m,y) \vert
   &\leq&\frac{2^\frac{1}{3}}{2\pi{d}}\left(\left[\frac{2^\frac{4}{3}}{d}+\kappa\right]\int_0^1 {\vert Y(y-\beta(y-z_l))\vert}d\beta+\int_0^1{\vert\nabla_y Y(y-\beta(y-z_l))\vert}d\beta\right).
  \end{eqnarray}
  \end{itemize}
  
  Then, for $l\neq m$ such that $\Omega_l\in N_{\Omega_m}$, \eqref{integralonomega-subelements} and \eqref{taylorremind-effect} imply the estimate
  \begin{eqnarray}\label{integralonomega-subelements-abs}
  \left\vert\int_{\Omega_l} \Phi_\kappa(z_m,y){K^{M}}(y){C^{M}}(y) Y(y) dy - \Phi_\kappa(z_m,z_l){K^{M}}(z_l)\bar{C}_l Y(z_l) Vol(\Omega_l)\right\vert \nonumber \\
  &\hspace{-13cm}\leq\quad\frac{2^\frac{1}{3}{K^{M}}(z_l)\bar{C}_l}{2\pi{d}}\left(\left[\frac{2^\frac{4}{3}}{d}+\kappa\right]\int_{\Omega_l} \left[\int_0^1 {\vert Y(y-\beta(y-z_l))\vert}d\beta\right]\vert y-z_l\vert dy\right.\nonumber\\
  &\hspace{-11cm}\left.+\int_{\Omega_l} \left[\int_0^1 {\vert \nabla_yY(y-\beta(y-z_l))\vert}d\beta\right]\vert y-z_l\vert dy\right)\nonumber\\
% % %   &\hspace{-14cm}\substack{\leq\\Vol(\Omega_l)=a\frac{\textcolor{black}{[K^M(z_l)]}}{K^M(z_l)}}\quad\frac{{K^{M}}(z_l)\bar{C}_l}{2\pi{d}}\,a^\frac{1}{3}\,\left(\left[\frac{2}{d}+\kappa\right]\int_{\Omega_l} \int_0^1 {\vert Y(y-\beta(y-z_l))\vert}{d\beta\, dy}\right.
% % %   \nonumber\\&\hspace{-11cm}
% % %   \left.+\int_{\Omega_l} \int_0^1 {\vert \nabla_yY(y-\beta(y-z_l))\vert}{d\beta\, dy}\right)\nonumber\\
% % %   &\hspace{-14cm}\substack{\leq\\Vol(\Omega_l)=a\frac{\textcolor{black}{[K^M(z_l)]}}{K^M(z_l)}}\quad\frac{{K^{M}}(z_l)\bar{C}_l}{2\pi{d}}\,a^\frac{1}{3}\,\left(\left[\frac{2}{d}+\kappa\right]\int_0^1\int_{\Omega_l}  {\vert Y(y-\beta(y-z_l))\vert}{dy\,d\beta }\right.
% % %   \nonumber\\&\hspace{-11cm}
% % %   \left.+\int_0^1\int_{\Omega_l}  {\vert \nabla_yY(y-\beta(y-z_l))\vert}{dy\,d\beta }\right)\nonumber\\
  &\hspace{-14.5cm}\substack{\leq\\Lemma \ref{invertibility-of-VC}}\quad\frac{2^\frac{1}{3}{K^{M}}(z_l)\bar{C}_l}{2\pi{d}}\,a^\frac{1}{3}\,\left(\left[\frac{2^\frac{4}{3}}{d}+\kappa\right]c_1\int_{\Omega_l}  dy\right.
  \left.+  |\Omega_l|\|\nabla {Y}\|_{L^\infty(\Omega_l)} \right)\nonumber\\
% % %     &\hspace{-15.8cm}\substack{\leq\\Vol(\Omega_l)=a\frac{\textcolor{black}{[K^M(z_l)]}}{K^M(z_l)}}\quad\frac{\textcolor{black}{[{K^{M}}(z_l)]}\bar{C}_l}{2\pi{d}}\,a\,\,a^\frac{1}{3}\,\left(\left[\frac{2}{d}+\kappa\right]c_1\right.
% % %   \left.+  \|\nabla {Y}\|_{L^\infty(\Omega_l)} \right)\nonumber\\
    &\hspace{-17cm}\substack{\leq\\ \eqref{interY-3}}\quad c_12^\frac{1}{3}\frac{\textcolor{black}{[{K^{M}}(z_l)]}\bar{C}_l}{2\pi{d}}\,a\,\,a^\frac{1}{3}\,\left(\left[\frac{2^\frac{4}{3}}{d}+\kappa\right]\right.
  \left.+  c_5 \right).
 \end{eqnarray}
In the similar way, for $l\neq m$ such that $\Omega_l\in F_{\Omega_m}$, we obtain the following estimate using \eqref{integralonomega-subelements} and \eqref{taylorremind-effect-F} ;
\begin{eqnarray}\label{integralonomega-subelements-abs-F}
  \left\vert\int_{\Omega_l} \Phi_\kappa(z_m,y){K^{M}}(y){C^{M}}(y) Y(y) dy - \Phi_\kappa(z_m,z_l){K^{M}}(z_l)\bar{C}_l Y(z_l) Vol(\Omega_l)\right\vert \nonumber\\
   &\hspace{-6cm}\leq\quad c_1\frac{\textcolor{black}{[{K^{M}}(z_l)]}\bar{C}_l}{2\pi{{a}^\frac{\alpha}{3}}}\left(\left[\frac{2}{{a}^\frac{\alpha}{3}}+\kappa\right]+c_5\right)\,a^\frac{1}{3}\,a. 
 \end{eqnarray}
 \item Let us estimate the integral value $\int_{\Omega_m} \Phi_\kappa(z_m,y){C^{M}}(y) Y(y) dy$. We have the following estimates:
\begin{eqnarray}\label{estmatemthint-effe-acc}
 \left\vert\int_{\Omega_m} \Phi_\kappa(z_m,y){K^{M}}(y){C^{M}}(y) Y(y) dy\right\vert
 &\leq&c_1{K^{M}}(z_m)\bar{C}_m\left\vert\int_{\Omega_m} \Phi_\kappa(z_m,y) dy\right\vert\nonumber\\
 &\leq&\frac{1}{4\pi}c_1{K^{M}}(z_m)\bar{C}_m\left(\int_{B(z_m,r)} \frac{1}{|z_m-y|} dy+\int_{\Omega_m\setminus B(z_m,r)} \frac{1}{|z_m-y|} dy\right)\nonumber\\
 &&{(\frac{1}{|z_m-y|} \in L^1(B(z_m,r)), r<\frac{a^{\frac{1}{3}}}{2} )}\nonumber\\
 &\leq&\frac{1}{4\pi}c_1{K^{M}}(z_m)\bar{C}_m\left(\sigma(\mathbb{S}^{3-1})\int_0^r \frac{1}{s}s^{3-1}ds+\frac{1}{r}Vol(\Omega_m\setminus B(z_m,r))\right)\nonumber\\
 &=&\frac{1}{4\pi}c_1{K^{M}}(z_m)\bar{C}_m\underbrace{\left(2\pi r^2 +\frac{1}{r}\left[a-\frac{4}{3}\pi r^3\right]\right)}_{=:lm(r,a)}\nonumber\\
 &\leq&\frac{1}{4\pi}c_1{K^{M}}(z_m)\mathbf{C}~lm(r^c,a),\nonumber\\
 &&\mbox{ $r^c$ is the value of $r$ where $lm(r,a)$ attains maximum}.\nonumber\\
 &&{\partial_r lm(r,a)=0 \Rightarrow 4\pi r-\frac{a}{r^2}-\frac{8}{3}\pi r=0\Rightarrow  r_c=\left(\frac{3}{4}\pi a\right)^\frac{1}{3}}\nonumber\\
 &&{\begin{array}{ccc}
                    lm(r_c,a)&=&2\pi\left(\frac{3}{4}\pi\right)^\frac{2}{3} a^\frac{2}{3}+\left(\frac{4}{3\pi} \right)^\frac{1}{3}a^\frac{2}{3}-\frac{4}{3}\pi\left(\frac{3}{4}\pi \right)^\frac{2}{3}a^\frac{2}{3}\\
                    &&\\
                    &=&\left[\frac{2}{3\pi}\left(\frac{3}{4}\pi\right)^\frac{2}{3}+\left(\frac{4}{3\pi} \right)^\frac{1}{3} \right]a^\frac{2}{3}=\frac{3}{2}\left(\frac{4}{3\pi} \right)^\frac{1}{3}a^\frac{2}{3}
                   \end{array}
}\nonumber\\ 
&=&\frac{3}{8\pi}c_1K_{max}\mathbf{C}\left(\frac{4}{3\pi} \right)^\frac{1}{3}a^\frac{2}{3}.
\end{eqnarray}
\end{itemize}

\subsubsection{Estimate of $B$}
$$\sum_{\substack{j=1 \\ j\neq m}}^{M} \Phi_\kappa(z_m,z_j)\bar{C}_j Y(z_j)a -\sum_{\substack{j=1 \\ j\neq m}}^{[a^{-1}]} \Phi_\kappa(z_m,z_j)K^M(z_j)\bar{C}_j Y(z_j) Vol(\Omega_j)=$$
$$
 \sum_{\substack{l=1 \\ l\neq m\\ z_l \in \Omega_m}}^{\textcolor{black}{[K^M(z_m)]}}\Phi_\kappa(z_m,z_l)\bar{C}_l Y(z_l) a+
\sum_{\substack{j=1 \\ j\neq m}}^{[a^{-1}]} \sum_{\substack{l=1 \\ z_l \in \Omega_j}}^{\textcolor{black}{[K^M(z_j)]}}\Phi_\kappa(z_m,z_l)\bar{C}_l Y(z_l) a-
\sum_{\substack{j=1 \\ j\neq m}}^{[a^{-1}]} \Phi_\kappa(z_m,z_j)K^M(z_j)\bar{C}_j Y(z_j) Vol(\Omega_j)=
$$
$$
\bar{C}_m a\sum_{\substack{l=1 \\ l\neq m\\ z_l \in \Omega_m}}^{\textcolor{black}{[K^M(z_m)]}}\Phi_\kappa(z_m,z_l) Y(z_l) +\sum_{\substack{j=1 \\ j\neq m}}^{[a^{-1}]}\bar{C}_j a\big[\big(\sum_{\substack{l=1 \\ z_l \in \Omega_j}}^{\textcolor{black}{[K^M(z_j)]}}\Phi_\kappa(z_m,z_l) Y(z_l)\big)-
 \Phi_\kappa(z_m,z_j)\textcolor{black}{[K^M(z_j)]} Y(z_j)\big],$$
 since $ Vol(\Omega_j)=a\frac{\textcolor{black}{[K^M(z_j)]}}{K^M(z_j)}\; \mbox{ and } \bar{C}_l=\bar{C}_j, \mbox{ for } l=1, ...,\; K^M(z_j).
$
We write,

\begin{eqnarray}\label{Ej1}
E^j_1&:=&\sum_{\substack{l=1 \\ l\neq m\\ z_l \in \Omega_m}}^{\textcolor{black}{[K^M(z_m)]}}\Phi_\kappa(z_m,z_l) Y(z_l)
\end{eqnarray}
 and
\begin{eqnarray}\label{Ej2}
E^j_2&:=&\big[\big(\sum_{\substack{l=1 \\ z_l \in \Omega_j}}^{\textcolor{black}{[K^M(z_j)]}}\Phi_\kappa(z_m,z_l) Y(z_l)\big)-
 \Phi_\kappa(z_m,z_j)\textcolor{black}{[K^M(z_j)]} Y(z_j)\big]\nonumber\\
 &=&\sum_{\substack{l=1 \\ z_l \in \Omega_j}}^{\textcolor{black}{[K^M(z_j)]}}\big(\Phi_\kappa(z_m,z_l) Y(z_l)-
 \Phi_\kappa(z_m,z_j) Y(z_j)\big).
\end{eqnarray}

We need to estimate $\bar{C}_m a E^j_1$ and $\sum_{\substack{j=1 \\ j\neq m}}^{[a^{-1}]}\bar{C}_j aE^j_2$. \\
% \textcolor{black}{For $\Omega_m,\; m=1,..., [a^{-1}]$, we maybe need to seperate the cases: near $\Omega_j$'s and far $\Omega_j$'s!!!!}\newline

 Observe that,
\begin{eqnarray}\label{equa-e1j}
        \sum_{\substack{j=1 \\ j\neq m}}^{[a^{-1}]}\bar{C}_j aE^j_2
       &=&\sum_{\substack{j=1 \\ j\neq m\\ \Omega_j\in N_{\Omega_m}}}^{[a^{-1}]}\bar{C}_j aE^j_2
                                                    +\sum_{\substack{j=1 \\ \\ j\neq m\\ \Omega_j\in F_{\Omega_m}}}^{[a^{-1}]}\bar{C}_j aE^j_2.                                           
\end{eqnarray}

\bigskip
Now by writing $f'(z_m,y):=\Phi_\kappa(z_m,y) Y(y)$. For $z_l\in\Omega_j,\,j\neq m$, using Taylor series, we can write
  $$f'(z_m,z_j)-f'(z_m,z_l)=(z_j-z_l)R'(z_m;z_j,z_l),$$
  with
 \begin{eqnarray}\label{taylorremind1'}
   R'(z_m;z_j,z_l)
   &=&\int_0^1\nabla_y f'(z_m,z_j-\beta(z_j-z_l))\,d\beta.
  \end{eqnarray}
By doing the computations similar to the ones we have performed in (\ref{taylorremind1}-\ref{taylorremind-effect}) and by using Lemma \ref{invertibility-of-VC}, we obtain
\begin{eqnarray}
\left\vert\sum_{\substack{j=1 \\ j\neq m\\ \Omega_j\in N_{\Omega_m}}}^{[a^{-1}]}\bar{C}_j aE^j_2\right\vert&\leq&\frac{c_1 2^\frac{1}{3}}{6}(K_{max}-1)\mathbf{C}\frac{a^\alpha}{{d}}\,a^\frac{1}{3}\,\left(\left[\frac{2^\frac{4}{3}}{d}+\kappa\right]\right.
  \left.+  c_5 \right)   \label{este2j1}\\
\left\vert\sum_{\substack{j=1 \\ j\neq m\\ \Omega_j\in F_{\Omega_m}}}^{[a^{-1}]}\bar{C}_j aE^j_2\right\vert&\leq&\frac{c_1}{2\pi}(K_{max}-1)
\mathbf{C}\frac{(a^{-1}-1)}{{{a}^\frac{\alpha}{3}}}\left(\left[\frac{2}{{a}^\frac{\alpha}{3}}+\kappa\right]+c_5\right)\,a^\frac{1}{3}\,a   \label{este2j2}.
\end{eqnarray}
One can easily see that,
\begin{eqnarray}
\vert\bar{C}_m a E^j_1\vert&\leq&\frac{c_1(K_{max}-1)\mathbf{C}}{4\pi}\frac{a}{d}.   \label{este1j}
\end{eqnarray}
\bigskip

Now, substitution of \eqref{integralonomega} in \eqref{fracqcfracmain-effect-int-1} and using the estimates 
\eqref{integralonomega-subelements-abs}, \eqref{integralonomega-subelements-abs-F} and \eqref{estmatemthint-effe-acc}  associated to $A$ and the estimates 
\eqref{este2j1}, \eqref{este2j2}, \eqref{este1j} associated to $B$  gives us

\begin{eqnarray}%\label{fracqcfracmain-effect-int-2}
 Y(z_m) +\sum_{\substack{j=1 \\ j\neq m}}^{M} \Phi_\kappa(z_m,z_j)\bar{C}_j Y(z_j) a
%  &=&-U^{i}(z_m, \theta)+O\left(\frac{1}{2\pi}c_1 K_{max}\mathbf{C}\left(\frac{4}{3\pi} \right)^\frac{1}{3}a^\frac{2}{3}\right)\nonumber\\
%  &&+O\left(c_1K_{max}\mathbf{C}\frac{1}{12}\frac{(a+2d^\alpha)^3}{a}a\frac{a^\frac{1}{3}}{d}\left[\left(\frac{2}{d}+\kappa\right)+c_5\right]\right).\nonumber\\
%   &&+O\left(c_1K_{max}\mathbf{C}\frac{(a^{-1}-1)}{2\pi}a\frac{a^\frac{1}{3}}{(a+2d^\alpha)}\left[\left(\frac{2}{(a+2d^\alpha)}+\kappa\right)+c_5\right]\right)\nonumber\\
%   &&+O\left(\frac{c_1(K_{max}-1)\mathbf{C} }{4\pi}\frac{a}{d}\right)\label{fracqcfracmain-effect-int-2}\\
  &=&-U^{i}(z_m, \theta)+O\left(\frac{3}{8\pi}c_1 K_{max}\mathbf{C}\left(\frac{4}{3\pi} \right)^\frac{1}{3}a^\frac{2}{3}\right)\nonumber\\
 &&+O\left(\frac{c_1K_{max}\mathbf{C} 2^\frac{1}{3}}{6}{a}^{\alpha}\frac{a^\frac{1}{3}}{d}\left[\left(\frac{2^\frac{4}{3}}{d}+\kappa\right)+c_5\right]\right).\nonumber\\
  &&+O\left(\frac{c_1K_{max}\mathbf{C}}{2\pi}(a^{-1}-1)\frac{a^\frac{4}{3}}{{a}^\frac{\alpha}{3}}\left[\left(\frac{2}{{a}^\frac{\alpha}{3}}+\kappa\right)+c_5\right]\right)\nonumber\\
   &&+O\left(\frac{c_1(K_{max}-1)\mathbf{C} }{4\pi}\frac{a}{d}\right).\label{fracqcfracmain-effect-int-3}
\end{eqnarray}

Taking the difference between \eqref{fracqcfracmain-effect} and \eqref{fracqcfracmain-effect-int-3} produces the algebraic system
\begin{eqnarray}\label{fracqcfracmain-effect-int-4}
 (Y_m-Y(z_m)) +\sum_{\substack{j=1 \\ j\neq m}}^{M} \Phi_\kappa(z_m,z_j)\bar{C}_j (Y_j-Y(z_j)) a 
 &=&O\left(\frac{3}{8\pi}c_1 K_{max}\mathbf{C}\left(\frac{4}{3\pi} \right)^\frac{1}{3}a^\frac{2}{3}\right)\\
 &&\hspace{-2cm}+O\left(\frac{c_1 K_{max}\mathbf{C} 2^\frac{1}{3}}{6}{a}^{\alpha}\frac{a^\frac{1}{3}}{d}\left[\left(\frac{2^\frac{4}{3}}{d}+\kappa\right)+c_5\right]\right)\nonumber\\
  &&\hspace{-2cm}+O\left(\frac{c_1 K_{max}\mathbf{C}}{2\pi}(a^{-1}-1)\frac{a^\frac{4}{3}}{{a}^\frac{\alpha}{3}}\left[\left(\frac{2}{{a}^\frac{\alpha}{3}}+\kappa\right)+c_5\right]\right)\nonumber\\
     &&\hspace{-2cm}+O\left(\frac{c_1(K_{max}-1)\mathbf{C} }{4\pi}\frac{a}{d}\right).\nonumber
\end{eqnarray}

Comparing this system with \eqref{fracqcfracmain-effect} and by using Lemma \ref{Mazyawrkthm-effect}, we obtain the estimate

\begin{eqnarray}\label{mazya-fnlinvert-small-ac-3-effect-dif}
 \sum_{m=1}^{M}(Y_m-Y(z_m))&=&O\left(\left(M\frac{3}{8\pi}c_1 K_{max}\mathbf{C}\left(\frac{4}{3\pi} \right)^\frac{1}{3}a^\frac{2}{3}\right)\right.\nonumber\\
 &&\left.+M\left(\frac{c_1 K_{max}\mathbf{C} 2^\frac{1}{3}}{6}{a}^{\alpha}\frac{a^\frac{1}{3}}{d}\left[\left(\frac{2^\frac{4}{3}}{d}+\kappa\right)+c_5\right]\right)\right.\nonumber\\
  &&\left.+M\left(\frac{c_1 K_{max}\mathbf{C}}{2\pi}(a^{-1}-1)\frac{a^\frac{4}{3}}{{a}^\frac{\alpha}{3}}\left[\left(\frac{2}{{a}^\frac{\alpha}{3}}+\kappa\right)+c_5\right]\right)\right.\nonumber\\
  &&\left.+M\left(\frac{c_1(K_{max}-1)\mathbf{C} }{4\pi}\frac{a}{d}\right)\right).
\end{eqnarray}

For the special case $d=a^t,\,M=O(a^{-s})$ with  $t,s>0$, we have the following approximation of the far-field from the Foldy-Lax asymptotic expansion \eqref{x oustdie1 D_m farmain-recent-near} and from the definitions $Y_m:=\frac{Q_m}{C_m}$ and $C_m:=\bar{C}_m a$, for $m=1,\dots,M$:
\begin{eqnarray}\label{x oustdie1 D_m farmain-recent**-effect}
U^\infty(\hat{x},\theta) &=&\sum_{j=1}^{M}e^{-i\kappa\hat{x}\cdot z_j}\bar{C}_jY_ja\\ \nonumber
&&+O\left(a^{2-s}\hspace{-.03cm}+\hspace{-.03cm}a^{3-s-5t+3t\alpha}\hspace{-.03cm}+\hspace{-.03cm}a^{4-s-9t+6t\alpha}\hspace{-.03cm}+\hspace{-.03cm}a^{3-2s-2t\alpha}\hspace{-.03cm}+\hspace{-.03cm}a^{4-3s-3t\alpha}\hspace{-.03cm}+\hspace{-.03cm}a^{4-2s-5t+2t\alpha}\right).
% \\ \nonumber&&\textcolor{black}{\left[O\left(a+a^{2\alpha}+a^{1-\alpha}+a^{\frac{1+2\alpha}{3}}\right)\quad \mbox{for }t=\frac{1}{3}, s=1.\right]} 
\end{eqnarray}
Consider the far-field of type:
\begin{eqnarray}\label{acoustic-farfield-effect}
U^\infty_{C^M}(\hat{x},\theta) &=& \int_{\Omega} e^{-i\kappa\hat{x}\cdot{y}}{K^{M}}(y){C^{M}}(y) Y(y) dy.
\end{eqnarray}
Taking the difference between \eqref{acoustic-farfield-effect} and \eqref{x oustdie1 D_m farmain-recent**-effect} gives us:
\begin{eqnarray}\label{acoustic-difference-farfield-effect}
 U^\infty_{C^M}(\hat{x},\theta)-U^\infty(\hat{x},\theta) &=& \int_{\Omega} e^{-i\kappa\hat{x}\cdot{y}}{K^{M}}(y){C^{M}}(y) Y(y) dy- \sum_{j=1}^{M}e^{-i\kappa\hat{x}\cdot z_j}\bar{C}_jY_ja\nonumber\\ \nonumber
 &&\hspace{0cm}+O\left(a^{2-s}\hspace{-.03cm}+\hspace{-.03cm}a^{3-s-5t+3t\alpha}\hspace{-.03cm}+\hspace{-.03cm}a^{4-s-9t+6t\alpha}\hspace{-.03cm}+\hspace{-.03cm}a^{3-2s-2t\alpha}\hspace{-.03cm}+\hspace{-.03cm}a^{4-3s-3t\alpha}\hspace{-.03cm}+\hspace{-.03cm}a^{4-2s-5t+2t\alpha}\right)
\\ \nonumber &=& \sum_{j=1}^{[a^{-1}]}\int_{\Omega_j} e^{-i\kappa\hat{x}\cdot{y}}{K^{M}}(y){C^{M}}(y) Y(y)dy -\sum_{j=1}^{[a^{-1}]} \sum_{\substack{l=1 \\ z_l \in \Omega_j}}^{\textcolor{black}{[K^M(z_j)]}}e^{-i\kappa\hat{x}\cdot z_l}\bar{C}_lY_la\nonumber\\ \nonumber
 &&\hspace{0cm}+O\left(a^{2-s}\hspace{-.03cm}+\hspace{-.03cm}a^{3-s-5t+3t\alpha}\hspace{-.03cm}+\hspace{-.03cm}a^{4-s-9t+6t\alpha}\hspace{-.03cm}+\hspace{-.03cm}a^{3-2s-2t\alpha}\hspace{-.03cm}+\hspace{-.03cm}a^{4-3s-3t\alpha}\hspace{-.03cm}+\hspace{-.03cm}a^{4-2s-5t+2t\alpha}\right)
\\ \nonumber &=& \sum_{j=1}^{[a^{-1}]}{K^{M}}(z_j)\bar{C}_j\int_{\Omega_j} \left[e^{-i\kappa\hat{x}\cdot{y}} Y(y) - e^{-i\kappa\hat{x}\cdot z_j}Y(z_j)\right]dy\nonumber\\ \nonumber
\\ \nonumber &&+ \sum_{j=1}^{[a^{-1}]}\bar{C}_ja \left[\sum_{\substack{l=1 \\ z_l \in \Omega_j}}^{\textcolor{black}{[K^M(z_j)]}}\left(e^{-i\kappa\hat{x}\cdot z_j} Y(z_j)-e^{-i\kappa\hat{x}\cdot z_l}Y(z_l)\right)+\sum_{\substack{l=1 \\ z_l \in \Omega_j}}^{\textcolor{black}{[K^M(z_j)]}} e^{-i\kappa\hat{x}\cdot z_l} \left(Y(z_l)-Y_l\right)\right]\nonumber\\ \nonumber
 &&\hspace{0cm}+O\left(a^{2-s}\hspace{-.03cm}+\hspace{-.03cm}a^{3-s-5t+3t\alpha}\hspace{-.03cm}+\hspace{-.03cm}a^{4-s-9t+6t\alpha}\hspace{-.03cm}+\hspace{-.03cm}a^{3-2s-2t\alpha}\hspace{-.03cm}+\hspace{-.03cm}a^{4-3s-3t\alpha}\hspace{-.03cm}+\hspace{-.03cm}a^{4-2s-5t+2t\alpha}\right)
 \\ \nonumber &=& \sum_{j=1}^{[a^{-1}]}\int_{\Omega_j}{K^{M}}(z_j)\bar{C}_j \left[e^{-i\kappa\hat{x}\cdot{y}} Y(y) - e^{-i\kappa\hat{x}\cdot z_j}Y(Z_j)\right]dy\nonumber\\
 &&+ \sum_{j=1}^{[a^{-1}]}\bar{C}_ja \sum_{\substack{l=1 \\ z_l \in \Omega_j}}^{\textcolor{black}{[K^M(z_j)]}}\left(e^{-i\kappa\hat{x}\cdot z_j} Y(z_j)-e^{-i\kappa\hat{x}\cdot z_l}Y(z_l)\right)+\sum_{j=1}^{M}e^{-i\kappa\hat{x}\cdot z_j}\bar{C}_ja \left[Y(z_j)-Y_j\right]\nonumber\\ \nonumber
 &&\hspace{0cm}+O\left(a^{2-s}\hspace{-.03cm}+\hspace{-.03cm}a^{3-s-5t+3t\alpha}\hspace{-.03cm}+\hspace{-.03cm}a^{4-s-9t+6t\alpha}\hspace{-.03cm}+\hspace{-.03cm}a^{3-2s-2t\alpha}\hspace{-.03cm}+\hspace{-.03cm}a^{4-3s-3t\alpha}\hspace{-.03cm}+\hspace{-.03cm}a^{4-2s-5t+2t\alpha}\right)\\
 &\substack{= \\ \eqref{mazya-fnlinvert-small-ac-3-effect-dif} }& \sum_{j=1}^{[a^{-1}]}{K^{M}}(z_j)\bar{C}_j\int_{\Omega_j} \left[e^{-i\kappa\hat{x}\cdot{y}} Y(y) - e^{-i\kappa\hat{x}\cdot z_j}Y(z_j)\right]dy\\
 &&+ \sum_{j=1}^{[a^{-1}]}\bar{C}_ja \sum_{\substack{l=1 \\ z_l \in \Omega_j}}^{\textcolor{black}{[K^M(z_j)]}}\left(e^{-i\kappa\hat{x}\cdot z_j} Y(z_j)-e^{-i\kappa\hat{x}\cdot z_l}Y(z_l)\right)\nonumber\\
&&\qquad+\mathbf{C}\,a\,O\left(\left(M\frac{3}{8\pi}c_1K_{max}\mathbf{C}\left(\frac{4}{3\pi} \right)^\frac{1}{3}a^\frac{2}{3}\right)+M\left(\frac{c_1(K_{max}-1)\mathbf{C} }{4\pi}\frac{a}{d}\right)\right.\nonumber\\
 &&\qquad\left.+M\left(\frac{c_1K_{max}\mathbf{C} 2^\frac{1}{3}}{6}{a}^{\alpha}\frac{a^\frac{1}{3}}{d}\left[\left(\frac{2^\frac{4}{3}}{d}+\kappa\right)+c_5\right]\right)\right.\nonumber\\
  &&\qquad\left.+M\left(\frac{c_1K_{max}\mathbf{C}}{2\pi}(a^{-1}-1)\frac{a^\frac{4}{3}}{{a}^\frac{\alpha}{3}}\left[\left(\frac{2}{{a}^\frac{\alpha}{3}}+\kappa\right)+c_5\right]\right)\right)\nonumber\\ \nonumber
   &&\hspace{0cm}+O\left(a^{2-s}\hspace{-.03cm}+\hspace{-.03cm}a^{3-s-5t+3t\alpha}\hspace{-.03cm}+\hspace{-.03cm}a^{4-s-9t+6t\alpha}\hspace{-.03cm}+\hspace{-.03cm}a^{3-2s-2t\alpha}\hspace{-.03cm}+\hspace{-.03cm}a^{4-3s-3t\alpha}\hspace{-.03cm}+\hspace{-.03cm}a^{4-2s-5t+2t\alpha}\right).
 \end{eqnarray}
Now, let us estimate the  difference $\sum_{j=1}^{[a^{-1}]}{K^{M}}(z_j)\bar{C}_j\int_{\Omega_j} \left[e^{-i\kappa\hat{x}\cdot{y}} Y(y) - 
e^{-i\kappa\hat{x}\cdot z_j}Y(z_j)\right]dy$.
\begin{itemize}
\item Write, $f_1(y)=e^{-i\kappa\hat{x}\cdot{y}} Y(y)$. Using Taylor series, we can write
  $$f_1(y)-f_1(z_j)=(y-z_j)R_j(y),$$
  with 
  \begin{eqnarray}
   R_j(y)
   &=&\int_0^1\nabla_y f_1(y-\beta(y-z_j))\,d\beta\nonumber\\
   &=&\int_0^1\nabla_y \left[e^{-i\kappa\hat{x}\cdot(y-\beta(y-z_j))} Y(y-\beta(y-z_j))\right]\,d\beta\nonumber\\
   &=&\int_0^1\left[\nabla_ye^{-i\kappa\hat{x}\cdot(y-\beta(y-z_j))}\right] Y(y-\beta(y-z_j))\,d\beta\nonumber\\
%    &&+\int_0^1e^{-i\kappa\hat{x}\cdot(y-\beta(y-z_j))}\left[\nabla_yC(y-\beta(y-z_j))\right] Y(y-\beta(y-z_j))\,d\beta\nonumber\\
   &&+\int_0^1e^{-i\kappa\hat{x}\cdot(y-\beta(y-z_j))}\left[\nabla_y Y(y-\beta(y-z_j))\right]\,d\beta.
  \end{eqnarray}

  We have $\nabla_ye^{-i\kappa\hat{x}\cdot{y}}=-i\kappa\hat{x}e^{-i\kappa\hat{x}\cdot{y}}$. %Hence for $0\leq\beta\leq1$, using $dist(D_m,\partial \Omega_m)\geq \frac{d}{T}$,
 It  gives us
 \begin{eqnarray}\label{taylorremind-effect-singvariable}
 \vert R_j(y) \vert
   &\leq&\left(\kappa \int_0^1 |Y(y-\beta(y-z_j))|\, d\beta\,+\,\int_0^1\vert\nabla_y Y(y-\beta(y-z_j))\vert\,d\beta\right).
  \end{eqnarray}
  Using  \eqref{taylorremind-effect-singvariable} we get the estimate
 \begin{eqnarray}\label{integralonomega-subelements-abs-effect}
  \left\vert\sum_{j=1}^{[a^{-1}]}{K^{M}}(z_j)\bar{C}_j\int_{\Omega_j} \left[e^{-i\kappa\hat{x}\cdot{y}}(y) Y(y) - e^{-i\kappa\hat{x}\cdot z_j}Y(z_j)\right]dy\right\vert&& \nonumber\\
  &\hspace{-15cm}\leq&\hspace{-7cm}\sum_{j=1}^{[a^{-1}]}{K^{M}}(z_j)\bar{C}_j \left(\kappa \int_{\Omega_j}\vert y-z_j\vert\int_0^1 |Y(y-\beta(y-z_j))|\, d\beta\,dy\right)\, \nonumber\\
  &\hspace{-15cm}+&\hspace{-7cm}\,\sum_{j=1}^{[a^{-1}]}{K^{M}}(z_j)\bar{C}_j \left(\int_{\Omega_j}\vert y-z_j\vert\int_0^1\vert\nabla_y Y(y-\beta(y-z_j))\vert\,d\beta\,dy\right) \nonumber\\
% % %   &\hspace{-15cm}\substack{\leq\\Vol(\Omega_j)=a\frac{\textcolor{black}{[K^M(z_j]}}{K^M(z_j)}}&\hspace{-6.5cm}\sum_{j=1}^{[a^{-1}]}{K^{M}}(z_j)\bar{C}_j \,a^\frac{1}{3}\,\left(\kappa \int_{\Omega_j}\int_0^1 |Y(y-\beta(y-z_j))|\, {d\beta\,dy}\,+\,\int_{\Omega_j}\int_0^1\vert\nabla_y Y(y-\beta(y-z_j))\vert\,{d\beta\,dy}\right) \nonumber\\
  &\hspace{-15cm}\substack{\leq\\ \mbox{As in \eqref{integralonomega-subelements-abs}}}&\hspace{-6.5cm}\sum_{j=1}^{[a^{-1}]}{K^{M}}(z_j)\bar{C}_jc_1\, a\,a^\frac{1}{3}\,\left(\kappa +c_5\right).\nonumber\\  
%    &\hspace{-6cm}\substack{\leq\\Vol(\Omega_j)=a}&\hspace{-2cm}\sum_{j=1}^{M}\left(\kappa A_jB_j+A^\prime_jB_j+A_jB^\prime_j\right)\,a^\frac{1}{3}\,a\nonumber\\
    &\hspace{-15cm}\leq&\hspace{-7cm} K_{max} \mathbf{C}c_1\left(\kappa +c_5\right)\,a^\frac{1}{3}.
 \end{eqnarray}
 \end{itemize}
  
 In the similar way, we can also show that,
 \begin{eqnarray}\label{integralonomega-subelements-abs-effect-1}
 \left\vert \sum_{j=1}^{[a^{-1}]}\bar{C}_ja \sum_{\substack{l=1 \\ z_l \in \Omega_j}}^{\textcolor{black}{[K^M(z_j)]}}\left(e^{-i\kappa\hat{x}\cdot z_j} Y(z_j)-e^{-i\kappa\hat{x}\cdot z_l}Y(z_l)\right)\right\vert
 &\leq& (K_{max}-1) \mathbf{C}c_1\left(\kappa +c_5\right)\,a^\frac{1}{3}.
 \end{eqnarray}
 Using the estimates \eqref{integralonomega-subelements-abs-effect} and \eqref{integralonomega-subelements-abs-effect-1} in \eqref{acoustic-difference-farfield-effect}, we obatin
 \begin{eqnarray}\label{acoustic-difference-farfield-effect-1}
 U^\infty_{C^M}(\hat{x},\theta)-U^\infty(\hat{x},\theta) 
 &=& O\left(K_{max}a^\frac{1}{3} \mathbf{C}c_1\left(\kappa +c_5\right)\right)\nonumber\\
&&+O\left(M\,a\,a^\frac{2}{3}\left(\frac{3}{8\pi}c_1K_{max}\mathbf{C}^2\left(\frac{4}{3\pi} \right)^\frac{1}{3}\right)+Ma\left(\frac{c_1(K_{max}-1)\mathbf{C}^2 }{4\pi}\frac{a}{d}\right)\right.\nonumber\\
 &&\qquad\left.+Ma\left(\frac{c_1K_{max}\mathbf{C}^2 2^\frac{1}{3}}{6}{a^{\alpha}}\frac{a^\frac{1}{3}}{d}\left[\left(\frac{2^\frac{4}{3}}{d}+\kappa\right)+c_5\right]\right)\right.\nonumber\\
  &&\qquad\left.+Ma\left(\frac{c_1K_{max}\mathbf{C}^2}{2\pi}(a^{-1}-1)\frac{a^\frac{4}{3}}{{a^{\frac{\alpha}{3}}}}\left[\left(\frac{2}{{a^{\frac{\alpha}{3}}}}+\kappa\right)+c_5\right]\right)\right)\nonumber\\ \nonumber
   &&\hspace{0cm}+O\left(a^{2-s}\hspace{-.03cm}+\hspace{-.03cm}a^{3-s-5t+3t\alpha}\hspace{-.03cm}+\hspace{-.03cm}a^{4-s-9t+6t\alpha}\hspace{-.03cm}+\hspace{-.03cm}a^{3-2s-2t\alpha}\hspace{-.03cm}+\hspace{-.03cm}a^{4-3s-3t\alpha}\hspace{-.03cm}+\hspace{-.03cm}a^{4-2s-5t+2t\alpha}\right) 
 \nonumber\\
&\substack{=\\{M=O(a^{-1})\,}\\{d=a^t,t<1\,}}&O\left( a^\frac{1}{3}K_{max}\mathbf{C}c_1\left(\kappa_{\max} +c_5\right)\right) \nonumber\\
&&+O\left(a^\frac{2}{3}\frac{3}{8\pi}c_1K_{max}\mathbf{C}^2\left(\frac{4}{3\pi} \right)^\frac{1}{3}+a^{1-t}\frac{c_1(K_{max}-1)\mathbf{C}^2 }{4\pi}\right.\nonumber\\
 &&\qquad\left.+\frac{c_1K_{max}\mathbf{C}^2 2^\frac{1}{3}}{6}a^{\alpha-2t}a^\frac{1}{3}\left[\left(2^\frac{4}{3}+d_{\max}\kappa_{\max}\right)+c_5d_{\max}\right]\right.\nonumber\\
  &&\qquad\left.+\frac{c_1K_{max}\mathbf{C}^2}{2\pi}\frac{a^{\frac{1-\alpha}{3}}}{2}\left[\left(2a^{-\frac{\alpha}{3}}+\kappa_{\max}\right)+c_5\right]\right)\nonumber\\ 
   &&\hspace{0cm}+O\left(a\hspace{-.03cm}+\hspace{-.03cm}a^{2-5t+3t\alpha}\hspace{-.03cm}+\hspace{-.03cm}a^{3-9t+6t\alpha}\hspace{-.03cm}+\hspace{-.03cm}a^{1-2t\alpha}\hspace{-.03cm}+\hspace{-.03cm}a^{1-3t\alpha}\hspace{-.03cm}+\hspace{-.03cm}a^{2-5t+2t\alpha}\right).\nonumber\\
      &=&O\left( a^\frac{1}{3}\hspace{-.03cm}+\hspace{-.03cm}a^\frac{2}{3}\hspace{-.03cm}+\hspace{-.03cm}a^{1-t}+a^{\alpha-2t+\frac{1}{3}}\hspace{-.03cm}+\hspace{-.03cm}
   a^{\frac{1-2\alpha}{3}}\hspace{-.03cm}+\hspace{-.03cm}a^{\frac{1-\alpha}{3}}\hspace{-.03cm}+\hspace{-.03cm}a\right.\nonumber\\
 &&\qquad\left.\hspace{-.03cm}+\hspace{-.03cm}a^{2-5t+3t\alpha}+a^{3-9t+6t\alpha}\hspace{-.03cm}+\hspace{-.03cm}a^{1-2t\alpha}\hspace{-.03cm}+\hspace{-.03cm}a^{1-3t\alpha}\hspace{-.03cm}+\hspace{-.03cm}a^{2-5t+2t\alpha}\right)\nonumber\\
    &=&O\left( a^{1-t}+a^{\alpha-2t+\frac{1}{3}}\hspace{-.03cm}+\hspace{-.03cm}
   a^{\frac{1-2\alpha}{3}}+a^{3-9t+6t\alpha}\hspace{-.03cm}+\hspace{-.03cm}a^{2-5t+2t\alpha}\hspace{-.03cm}+\hspace{-.03cm}a^{1-3t\alpha}\right).
  \end{eqnarray}

 Since $Vol(\Omega)$ is of order $a^{-1}(\frac{a}{2}+\frac{d}{2})^3$, then we observed that ${d\leq O(a^{\frac{1}{3}}})$, otherwise this volume exploses as $a\rightarrow 0$. 
 In particular for $d$ of the form $a^t$, we should have {$t\geq\frac{1}{3}$}. From the above we should have,

  \begin{eqnarray*}
  t\geq\frac{1}{3}; \qquad 1-t>0; \qquad \alpha-2t+\frac{1}{3}>0; \qquad   \frac{1-2\alpha}{3}>0;     \\
      3-9t+6t\alpha>0;\qquad  1-3t\alpha>0;\qquad 2-5t+2t\alpha>0;
  \end{eqnarray*}
  
which further reduces to {
\begin{eqnarray*}
 \frac{1}{3}\leq t<1;  &\qquad&0<\alpha<\frac{1}{2}; \qquad t\alpha<\frac{1}{3};\\
%  t\alpha<\frac{1}{6};\\
 2-5t+2t\alpha>0;&\Longrightarrow& t< \frac{2}{5-2\alpha}\,\rightarrow\,\alpha<\frac{5}{2};\\
 1-6t+3\alpha>0;&\Longrightarrow& t< \frac{3\alpha+1}{6}\,\rightarrow\,{\alpha>-\frac{1}{3}};\\
 3-9t+6t\alpha>0;&\Longrightarrow& t< \frac{1}{3-2\alpha}\,\rightarrow\,\alpha<\frac{3}{2}.\\
\end{eqnarray*}
Hence for {$\frac{1}{3}\leq{t}<1$}, we have 
{
\begin{eqnarray*}
 {0<\alpha<\frac{1}{2}};\qquad   \frac{1-2\alpha}{3}>0;\qquad t\alpha<\frac{1}{3};\qquad2-5t+2t\alpha>0;\qquad
  1-6t+3\alpha>0;\qquad
 3-9t+6t\alpha>0;
\end{eqnarray*}
}
% \begin{figure}[htp]
% \centering
% \includegraphics[width=15.5cm,height =10.5cm,natwidth=610,natheight=642]{Relation-between-t-and-alpha.png}
% \caption{Relation between $\alpha$ and $t$.}\label{fig:Comparison-alphat1}
% \end{figure}
\begin{figure}[htp]
\centering
\includegraphics[width=15.5cm,height =10.5cm,natwidth=610,natheight=642]{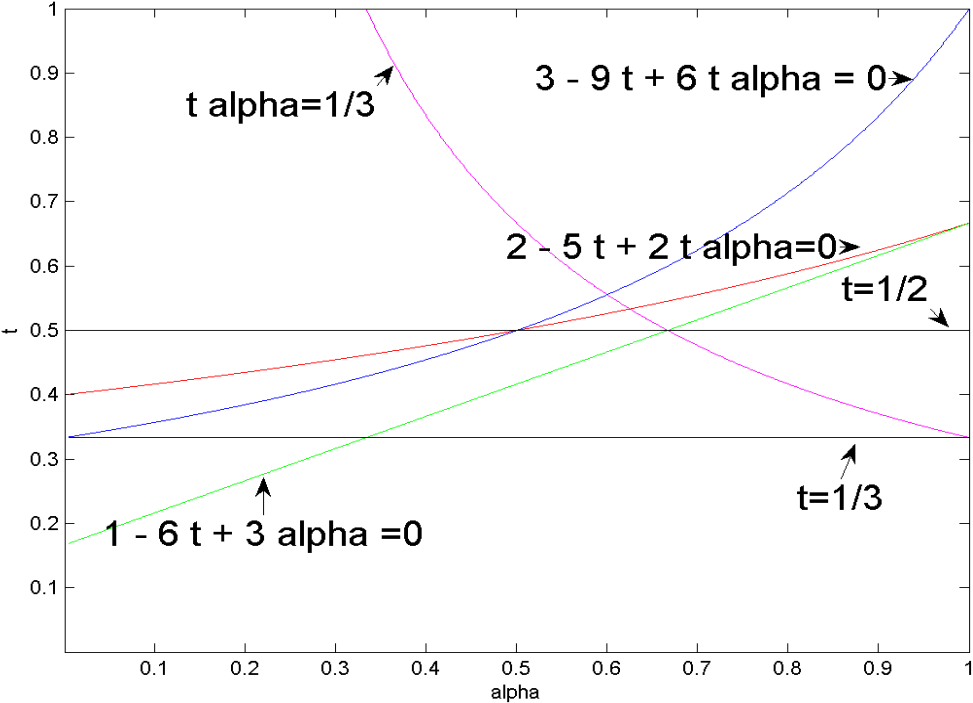}
\caption{Relation between $\alpha$ and $t$.}\label{fig:Comparison-alphat2}
\end{figure}

By solving these inequalities, also see figure \ref{fig:Comparison-alphat2}, we obtain {$\frac{1}{3}\leq {t}<\frac{5}{12}$} and {$\frac{1}{3}< {\alpha}<\frac{1}{2}$}, in particular {$\frac{1}{2^\frac{3}{2}}\leq {\alpha}<\frac{1}{2}$}. Precisely,

\begin{eqnarray}\label{acoustic-difference-farfield-effect-1**1-1}
 U^\infty_{C^M}(\hat{x},\theta)-U^\infty(\hat{x},\theta)
      &=&O\left( a^{\frac{1-6t+3\alpha}{3}}+a^{\frac{1-2\alpha}{3}}\right).
  \end{eqnarray}
  Equaling the exponents in the right hand side, i.e. $\frac{1-6t+3\alpha}{3}=\frac{1-2\alpha}{3}$, we deduce that $\alpha=\frac{6}{5}t$. Hence
\begin{eqnarray}\label{acoustic-difference-farfield-effect-1**1-2}
 U^\infty_{C^M}(\hat{x},\theta)-U^\infty(\hat{x},\theta)=O\left( a^{\frac{1}{3}\left(1-\frac{12}{5}t\right)}\right).
  \end{eqnarray}
% % %  which further can be written as 
% % % \begin{eqnarray}\label{acoustic-difference-farfield-effect-1*1}
% % %  U^\infty_{C^M}(\hat{x},\theta)-U^\infty(\hat{x},\theta) 
% % %       &=&\left\{\begin{array}{ccc}
% % %       O\left( a^{\frac{1-6t+3\alpha}{3}}\right),& \mbox{ for } \frac{1}{3}<\alpha\leq\frac{6}{5}t,\\ \ ~ \ \\
% % %       O\left( a^{\frac{1}{3}\left(1-\frac{12}{5}t\right)}\right),& \mbox{ for } \alpha=\frac{6}{5}t,\\ \ ~ \ \\
% % %       O\left(a^{\frac{1-2\alpha}{3}}\right),&\mbox{ for } \frac{6}{5}t\leq\alpha<\frac{1}{2}.\\     
% % %          \end{array}
% % %          \right.
% % % \end{eqnarray}

\begin{remark}\label{Remark-of-the-final-estimate}
For the setting of each $\Omega_m$ containing only one obstacle $D_m$, $m=1,\dots,M$, i.e. $M=[a^{-1}]$  we can observe that $t=\frac{1}{3}$ and $\frac{1}{3}<\alpha<\frac{1}{2}$. In this case we can derive the estimate as
\begin{eqnarray}\label{acoustic-difference-farfield-effect-1**1}
 U^\infty_{C^M}(\hat{x},\theta)-U^\infty(\hat{x},\theta)
      &=&O\left( a^{\frac{3\alpha-1}{3}}+a^{\frac{1-2\alpha}{3}}\right).
  \end{eqnarray}
It can also be written as 
\begin{eqnarray}\label{acoustic-difference-farfield-effect-1***1}
 U^\infty_{C^M}(\hat{x},\theta)-U^\infty(\hat{x},\theta) 
      &=&\left\{\begin{array}{ccc}
      O\left( a^{\frac{3\alpha-1}{3}}\right),& \mbox{ for } \frac{1}{3}<\alpha\leq\frac{2}{5},\\ \ ~ \ \\
      O\left( a^{\frac{1}{15}}\right),& \mbox{ for } \alpha=\frac{2}{5},\\ \ ~ \ \\
      O\left(a^{\frac{1-2\alpha}{3}}\right),&\mbox{ for } \frac{2}{5}\leq\alpha<\frac{1}{2}.\\     
         \end{array}
         \right.
\end{eqnarray}
In this case ( $s=1,t=\frac{1}{3}$ ) it is clear that the best estimate is attained for $\alpha=\frac{2}{5}$ and hence the following error estimate holds
   \begin{eqnarray}\label{final-estimate}
 U^\infty_{C^M}(\hat{x},\theta)-U^\infty(\hat{x},\theta) 
      &=&O\left( a^{\frac{1}{15}}\right).
  \end{eqnarray}
  
  It can also be observed that estimate \eqref{acoustic-difference-farfield-effect-1**1-1} leads to the estimate \eqref{final-estimate} for $t=\frac{1}{3}$.
\end{remark}

 \subsection{Case when the obstacles are arbitrarily distributed}\label{arbitrarely-distributed}
 In this case, we take $N=[a^{-1}]$ and $N_j=1, j=1, ..., N$ in the way we divide $\Omega$, i.e. $\Omega:=\cup^{[a^{-1}]}_{j=1}\Omega_j$, $\Omega_j$'s are disjoint,
 see the beginning of Subsection \ref{subsection-piecewise-constant}.
 Hence, due to the analysis in the last subsection, we end up with the following approximation
 \begin{equation}\label{piecewise-constant-capacitances}
  U^\infty(\hat{x},\theta)=U^\infty_{a}(\hat{x},\theta) +o(a^\frac{1}{15}),\; \; a\rightarrow 0
  \end{equation}
where $U^\infty_{a}(\hat{x},\theta)$ is the farfield corresponding to the following scattering problem
\begin{equation}
(\Delta + \kappa^{2}-K_a\bold{C}_a)U_{a}^{t}=0 \mbox{ in }\mathbb{R}^{3},\label{piecewise-1}
\end{equation}
\begin{equation}
U_{a}^{t}=U_{a}^s +e^{i\kappa x\cdot \theta}  
\end{equation}
\begin{equation}
\frac{\partial U_{a}^{s}}{\partial |x|}-i\kappa U_{a}^{s}=o\left(\frac{1}{|x|}\right), |x|\rightarrow\infty, ~(\text{S.R.C}) \label{radiation-piecewise-1}
\end{equation}
 and $\bold{C}_a$ is the potential defined as follows: $\bold{C}_a=C_j$ in $\Omega_j, \; j:=1, ..., M$ and $\bold{C}_a=0$ in $\mathbb{R}^{3}\setminus{\overline{\Omega}}$.
 We have similar properties for $K_a$.
 \bigskip
 
 We know that the solution of this last scattering problem satisfies the Lippmann-Schwinger equation
 \begin{equation}\label{Lip-Sc-piecewise-constant}
  U_{a}^{t}(x) +\int_{\Omega}K_a\bold{C}_a(y)\Phi(x, y)U_{a}^{t}(y)dy = e^{i\kappa x\cdot \theta}, \; x\in \Omega.
 \end{equation}
 In addition, we know that the function $\bold C_a$, defined from the sequence $(C_j)^M_{j=1}$, recalling that $M=[a^{-1}]$, is bounded as function of $a\in (0, 1)$ as $a\rightarrow 0$, i.e. $\bold C_a$ is bounded in 
 $L^\infty(\Omega)$. 
 Indeed, the capacitances of the obstacles $B_j$, i.e. $C_j$ are bounded by their Lipschitz constants, see \cite{C-S:2014}, and we assumed that these Lipschitz constants are uniformly bounded.
 Hence $\bold C_a$ is bounded in $L^{2}(\Omega)$ and then there exists a function $\bold C_0$ in $L^2(\Omega)$ (actually in every $L^p(\Omega)$) such that $\bold C_a$ converges weakly to $\bold C_0$
 in $L^{2}(\Omega)$. Now, since $K$ is continuous hence $K_a$ converges to $K$ in $L^\infty(\Omega)$ and hence in $L^2(\Omega)$. 
 Then we can show that $K_a \bold C_a$ converges to $K \bold C_0$ in $L^2(\Omega)$. 
 \bigskip
 
 Since $K\bold C_a$ is bounded in $L^{\infty}(\Omega)$, then from the invertibility of the Lippmann-Schwinger equation and the mapping properties of the Poisson potential, 
 see Lemma \ref{invertibility-of-VC},
 we deduce that $\Vert U_{a}^{t}\Vert_{H^2(\Omega)}$ is bounded and in particular, up to a sub-sequence, $U_{a}^{t}$ tends to $U_{0}^{t}$ in $L^2(\Omega)$. 
 From the convergence of $K_a\bold C_a$ to $K\bold C_0$
 and the one of $U_{a}^{t}$ to $U_{0}^{t}$ and (\ref{Lip-Sc-piecewise-constant}), 
 we derive the following equation satisfied by 
 $U_{0}^{t}(x)$
 $$
 U_{0}^{t}(x) +\int_{\Omega}K\bold{C}_0(y)\Phi(x, y)U_{0}^{t}(y)dy=e^{i\kappa x\cdot \theta}\; \mbox{ in } \Omega.
 $$
This is of course the Lippmann-Schwinger equation corresponding to the scattering problem
\begin{equation}
(\Delta + \kappa^{2}-K \bold{C}_0)U_{0}^{t}=0 \mbox{ in }\mathbb{R}^{3},\label{piecewise-2}
\end{equation}
\begin{equation}
U_{0}^{t}=U_{0}^s +e^{i\kappa x\cdot \theta},  
\end{equation}
\begin{equation}
\frac{\partial U_{0}^{s}}{\partial |x|}-i\kappa U_{0}^{s}=o\left(\frac{1}{|x|}\right), |x|\rightarrow\infty, ~(\text{S.R.C}). \label{radiation-piecewise-2}
\end{equation}
 As the corresponding farfields are of the form
 $$
 U_0^{\infty}(\hat{x}, \theta)=\int_{\Omega}e^{-i\kappa \hat{x}\cdot y}K\bold{C}_0(y)U_{0}^{t}(y)dy
 $$
 and the ones of $U^t_{a}$ are of the form
 $$
 U_a^{\infty}(\hat{x}, \theta)=\int_{\Omega}e^{-i\kappa \hat{x}\cdot y}K_a\bold{C}_a(y)U_{a}^{t}(y)dy
 $$
 we deduce that
 $$
 U^\infty_{a}(\hat{x},\theta)-U^\infty_{0}(\hat{x},\theta)=o(1),\; a\rightarrow 0, \mbox{ uniformly in terms of } \hat x , \theta\; \in \mathbb{S}^{2}.
 $$

 \subsection{Case when $K$ is H$\ddot{\mbox{o}}$lder continuous}\label{smoothly-distributed}
 
 Finally assume that $K\in C^{0, \gamma}(\Omega),\; \gamma \in (0, 1]$, then we have the estimate $\Vert K -K_a\Vert_{L^\infty(\Omega)}\leq C a^{\gamma}$, $a<<1$.
 Let $C_0=C$, a constant, in $\Omega$ and $C_0=0$ in $\mathbb{R}^3\setminus \Omega$. Recall that $U_0$ and $U_a$ are solutions of the Lippmann-Schwinger equations 
 $$
 U_0+\int_{\Omega}\Phi(x, y)K\bold{C}_0(y)U_{0}^{t}(y)dy=e^{i \kappa x \cdot \theta}
 $$
 and 
 $$
 U_a+\int_{\Omega}\Phi(x, y)K_a\bold{C}_0(y)U_{a}^{t}(y)dy=e^{i \kappa x \cdot \theta}.
 $$
 From the estimate $\Vert K -K_a\Vert_{L^\infty(\Omega)}\leq C a^{\gamma}$, $a<<1$, we derive the estimate
 \begin{equation}\label{appro-0-a}
  U_0^\infty(\hat{x}, \theta)-U^\infty_a(\hat{x}, \theta)=O(a^\gamma),\; a<<1, \mbox{ uniformly in terms of } \hat x , \theta\; \in \mathbb{S}^{2}.
 \end{equation}
Combining this estimate with (\ref{acoustic-difference-farfield-effect-1**1-2}), we deduce that
\begin{equation}\label{final}
  U^\infty(\hat{x}, \theta)-U_0^\infty(\hat{x}, \theta)=O(a^{\min{\gamma},\; \frac{1}{3}-\frac{4}{5}t}),\; a<<1, \mbox{ uniformly in terms of } \hat x , \theta\; \in \mathbb{S}^{2}.
 \end{equation}

 %\clearpage
 
 \bibliographystyle{abbrv}

\end{document}